\documentclass[12pt,amsymb,fullpage]{amsart}
\usepackage{amssymb,amscd,pstricks}
\usepackage{fancyhdr}
\pagestyle{fancy}
\fancyhead{}
\fancyhead[LE,RO]{\thepage}
\fancyhead[RE]{\leftmark}
\fancyhead[LO]{\rightmark}
\setcounter{secnumdepth}{1}
\setcounter{tocdepth}{1}

\newtheorem{theorem}{Theorem}[section]
\newtheorem{defn}[theorem]{Definition}

\newtheorem{lemma}[theorem]{Lemma}

\newtheorem{eple}[theorem]{Example}
\newtheorem{rmk}[theorem]{Remarks}
\newtheorem{dsc}[theorem]{Discussion}
\newtheorem{nota}[theorem]{Notation}

\newsavebox{\indbin}
\savebox{\indbin}{\begin{picture}(0,0)
\newlength{\gnu}
\settowidth{\gnu}{$\smile$} \setlength{\unitlength}{.5\gnu}
\put(-1,-.65){$\smile$} \put(-.25,.1){$|$}
\end{picture}}

\newcommand{\be}{\begin{enumerate}}
\newcommand{\bd}{\begin{defn}}
\newcommand{\bt}{\begin{theorem}}
\newcommand{\bl}{\begin{lemma}}
\newcommand{\ee}{\end{enumerate}}
\newcommand{\ed}{\end{defn}}
\newcommand{\et}{\end{theorem}}
\newcommand{\el}{\end{lemma}}

\begin{document}
\title{A Simple Proof of the Uniform Convergence of\\
Fourier Series using Nonstandard Analysis}
\author{Tristram de Piro}
\address{Mathematics Department, The University of Exeter, Exeter}
 \email{tdpd201@exeter.ac.uk}
\thanks{}
\begin{abstract}
We give a proof of the uniform convergence of Fourier series, using the methods of nonstandard analysis.
\end{abstract}
\maketitle

We use the following notation;\\

\begin{defn}
\label{coefficient}
We let $\mathcal{S}$ denote the circle of radius ${1\over\pi}$, which we will model by the real interval $[-1,1]$, with the endpoints $\{-1,1\}$ identified. We denote by $C^{\infty}(\mathcal{S})$, the set of all functions $g:\mathcal{S}\rightarrow\mathcal{C}$, which are infinitely differentiable, in the sense of real manifolds. Equivalently, using L'Hospital's Rule, $C^{\infty}(\mathcal{S})$ consists of the functions $g:(-1,1)\rightarrow\mathcal{C}$, which are infinitely differentiable, and such that $g$ and all its derivatives $\{g^{(n)}:n\in\mathcal{N}\}$ extend to continuous functions on $\mathcal{S}$.\\

For such a function $g$, we define its $m$'th Fourier coefficient, where $m\in\mathcal{Z}$, by;\\

$\hat{g}(m)=\int_{\mathcal{S}}g(x)e^{-\pi ixm}dx=\int_{-1}^{1}g(x)e^{-\pi ixm}dx$\\
\end{defn}

\begin{theorem}{Uniform Convergence of Fourier Series}\\

\label{converge}

Let $g\in C^{\infty}(\mathcal{S})$, then;\\

$g(x)={1\over 2}\sum_{m\in\mathcal{Z}}\hat{g}(m)e^{\pi ixm}$ for all $x\in\mathcal{S}$\\

where the infinite sum is considered as $lim_{n\rightarrow\infty}S_{n}(g)$, $n\in\mathcal{N}$, for the finite sum $S_{n}(g)=\sum_{m=-n}^{m=n}\hat{g}(m)e^{\pi ixm}$. Moreover, the convergence is uniform on $\mathcal{S}$.
\end{theorem}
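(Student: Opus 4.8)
The plan is to combine a standard decay estimate for the Fourier coefficients with a nonstandard treatment of the Dirichlet kernel, so that the passage to the limit $n\rightarrow\infty$ is replaced by evaluation at an infinite hypernatural $N$. First I would show that the coefficients $\hat{g}(m)$ decay rapidly. Since $g\in C^{\infty}(\mathcal{S})$, every derivative $g^{(k)}$ is continuous on the circle, and integrating by parts in the defining integral $\hat{g}(m)=\int_{-1}^{1}g(x)e^{-\pi ixm}\,dx$ produces no boundary contribution, because the identification of the endpoints $\{-1,1\}$ makes $g$ and all its derivatives periodic, so that $g(1)=g(-1)$ and $e^{-\pi im}=e^{\pi im}$. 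Iterating $k$ times yields $\hat{g}(m)=(\pi im)^{-k}\,\widehat{g^{(k)}}(m)$ for $m\neq 0$, and since $|\widehat{g^{(k)}}(m)|\le 2\|g^{(k)}\|_{\infty}$, we obtain $|\hat{g}(m)|\le C_{k}|m|^{-k}$ for every $k$. Taking $k=2$ already gives $\sum_{m\in\mathcal{Z}}|\hat{g}(m)|<\infty$.

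The decay estimate delivers the uniform convergence claim almost immediately. By the Weierstrass M-test the series $\frac{1}{2}\sum_{m}\hat{g}(m)e^{\pi ixm}$ converges absolutely and uniformly on $\mathcal{S}$ to some continuous function $f$. In nonstandard terms, transferring the tail estimate shows that for every infinite $N$ and every $x\in\mathcal{S}$ one has $\frac{1}{2}S_{N}(g)(x)\approx f(x)$, the discrepancy being dominated by the infinitesimal tail $\sum_{|m|>n}|\hat{g}(m)|$ for standard $n$. It therefore remains only to identify $f$ with $g$.

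For the identification I would show directly that $\frac{1}{2}S_{N}(g)(x)\approx g(x)$ for every infinite $N$ and every $x$. Writing the partial sum as a convolution against the Dirichlet kernel $K_{N}(t)=\frac{1}{2}\sum_{m=-N}^{N}e^{\pi imt}=\frac{\sin((N+\frac{1}{2})\pi t)}{2\sin(\pi t/2)}$, and using $\int_{-1}^{1}K_{N}=1$, I reduce to
\[
\frac{1}{2}S_{N}(g)(x)-g(x)=\int_{-1}^{1}\psi_{x}(t)\,\sin\!\big((N+\tfrac{1}{2})\pi t\big)\,dt,\qquad \psi_{x}(t)=\frac{g(x+t)-g(x)}{2\sin(\pi t/2)}.
\]
Because $g$ is smooth, the apparent singularity of $\psi_{x}$ at $t=0$ is removable, so $\psi_{x}$ extends to a $C^{1}$ function on the circle with $\|\psi_{x}\|_{\infty}$ and $\|\psi_{x}'\|_{\infty}$ bounded uniformly in $x$. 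The crux is then a nonstandard Riemann--Lebesgue step: integrating by parts once shows the displayed integral is $O(1/N)$, hence infinitesimal, uniformly in $x$. This simultaneously forces $f=g$ and upgrades the convergence to uniform, completing the proof.

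The step I expect to be the main obstacle is the last one: controlling the oscillatory integral $\int_{-1}^{1}\psi_{x}(t)\sin((N+\frac{1}{2})\pi t)\,dt$ uniformly in $x$. The removable-singularity analysis must be carried out with enough care to bound $\psi_{x}$ and its derivative independently of $x$, and this is precisely where the smoothness of $g$, rather than mere continuity, is essential. In addition, the boundary terms arising in the integration by parts must be checked to vanish, using the fact that numerator and denominator of $K_{N}$ are each anti-periodic of period $2$ so that their ratio is genuinely periodic on $\mathcal{S}$.
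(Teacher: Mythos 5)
Your argument is correct, but it takes a genuinely different route from the paper's. The paper never touches the Dirichlet kernel: it proves an exact inversion formula on the finite groups $G_{n,n}$ by character theory (Theorem \ref{discinversion}), transfers it to a hyperfinite inversion formula on $\overline{\mathcal{S}}_{\eta}$ (Lemma \ref{nsinversion}), and then concentrates all the analysis in establishing $S$-integrability of $g_{\eta}$ and $\hat{g}_{\eta}$ with respect to the Loeb measures (Theorems \ref{gSinteg}, \ref{transgSinteg}); the decay bound $|\hat{g}_{n,st}(m)|\leq H/m^{2}$ is obtained by a \emph{discrete} summation-by-parts calculus, uniformly in $n$ (Lemmas \ref{disccalc}--\ref{tailvanish}), and the theorem follows by taking standard parts, the DCT, and the $M$-test. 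You instead get the decay $|\hat{g}(m)|\leq C_{k}|m|^{-k}$ by classical integration by parts, deduce uniform convergence to some $f$ from the $M$-test (this much overlaps with the paper's final step, which also uses the $O(1/m^{2})$ bound plus the $M$-test), and identify $f=g$ by Dirichlet-kernel localization with one integration by parts giving an $O(1/N)$ bound uniform in $x$. Your route is shorter, more elementary, and needs only $g\in C^{2}$; moreover your nonstandard gloss (evaluating at an infinite $N$) is a removable veneer, and indeed your kernel estimate alone already proves the theorem, making the $M$-test stage redundant. What the paper's route buys is precisely the avoidance of kernel estimates: inversion comes for free from finite group duality, at the cost of the Loeb-measure and $S$-integrability machinery, which is evidently the point of the paper. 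One small correction to your last step: $\psi_{x}$ does \emph{not} extend to a function on the circle, since the numerator $g(x+t)-g(x)$ is $2$-periodic in $t$ while $2\sin(\pi t/2)$ is anti-periodic, so $\psi_{x}(1)=-\psi_{x}(-1)$; but this is harmless, because $\psi_{x}$ is smooth on $[-1,1]$ (the singularity at $t=0$ being removable) and the boundary term in your integration by parts is $-\frac{\cos((N+1/2)\pi)}{(N+1/2)\pi}\bigl(\psi_{x}(1)-\psi_{x}(-1)\bigr)$, which vanishes since $\cos((N+\frac{1}{2})\pi)=0$ --- equivalently, because $\psi_{x}(t)\cos((N+\frac{1}{2})\pi t)$, a product of two anti-periodic functions, is $2$-periodic --- and not because the ratio defining $K_{N}$ is periodic, which is the reason you gave.
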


\begin{rmk}
\label{normal}
The result of Theorem \ref{converge} generalises immediately, to obtain, if $g\in C^{\infty}([a,b])$, (with $L=b-a>0$ and the obvious extension of the above definition), that;\\

$g(x)=\sum_{m\in\mathcal{Z}}\hat{g}(m)e^{2\pi i{xm\over L}}$ $(uniform\  convergence)$\\

where, for $m\in\mathcal{Z}$;\\

$\hat{g}(m)={1\over L}\int_{a}^{b}g(x)e^{-2\pi i{xm\over L}}dx$\\

This is achieved by a simple change of variables. Observe that this result also demonstrates that the functions $\{e_{n}(x)=e^{2\pi i{xm\over L}}:m\in\mathcal{Z}\}$ form an orthonormal "basis" of $C^{\infty}([a,b])\subset L^{2}([a,b])$, with respect to the (normalised by ${1\over L}$) inner product on $L^{2}([a,b])$. The more general result that they form an orthonormal basis of $L^{2}([a,b]$ follows from the fact that $C^{\infty}([a,b])$ is dense in $L^{2}([a,b])$. Proofs of these results (without using nonstandard analysis) can be found in \cite{SS} and \cite{Kor}.

\end{rmk}

In order to prove Theorem \ref{converge}, we follow the strategy of \cite{dep1}.
\begin{defn}
\label{spaces}
Let $\eta\in{{^{*}\mathcal{N}}\setminus\mathcal{N}}$, then we define;\\

${\overline{S}}_{\eta}=\{\tau\in{^{*}\mathcal{R}}:-1\leq\tau<1\}$\\

As in Definition 0.5 of \cite{dep1}, $\mathfrak{C}_{\eta}$ is the $*$-finite algebra consisting of internal unions of intervals of the form $[{i\over\eta},{i+1\over\eta})$, for $-\eta\leq i<\eta$, $i\in{^{*}\mathcal{N}}$. $\lambda_{\eta}$ is the counting measure on $\mathfrak{C}_{\eta}$ given by $\lambda_{\eta}([{i\over\eta},{i+1\over\eta}))={1\over\eta}$. Then $({\overline{S}}_{\eta},\mathfrak{C}_{\eta},\lambda_{\eta})$ is a hyperfinite measure space with $\lambda_{\eta}({\overline{S}}_{\eta})=2$. We denote by $({\overline{S}}_{\eta},L(\mathfrak{C}_{\eta}),L(\lambda_{\eta}))$, the associated Loeb space. The existence of such a space and the extension of ${^{\circ}\lambda}$ to $\sigma(\mathfrak{C}_{\eta})$ is shown in \cite{Loeb}. After producing the extension, we are then passing to the completion.\\

We let $([0,1],\mathfrak{B},\mu)$ denote the completion of the restriction of the Borel field on $\mathcal{R}$ to $[0,1]$, with respect to Lebesgue measure $\mu$. We let $(S,\mathfrak{B},\mu)$ denote the obvious corresponding measure space on $S$, so the identification map $G:[-1,1]\rightarrow S$ is continuous, measurable and measure preserving.
\end{defn}

\begin{lemma}
\label{stpart}
The standard part mapping;\\

\indent \\

$st:({\overline{S}}_{\eta},L(\mathfrak{C}_{\eta}),L(\lambda_{\eta}))\rightarrow ([-1,1],\mathfrak{B},\mu)$\\

is measurable and measure preserving. In particular, if $g\in C^{\infty}(S)$, then $(G\circ st)^{*}(g)$ is integrable with respect to $L(\lambda_{\eta})$ and;\\

$\int_{{\overline{S}}_{\eta}}(G\circ st)^{*}(g)dL(\lambda_{\eta})=\int_{-1}^{1}G^{*}(g) d\mu=\int_{S}g d\mu$\\

\end{lemma}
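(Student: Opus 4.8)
The plan is to establish the measurability and measure-preserving properties of the standard part map, and then deduce the integral identities as consequences.

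\medskip

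\textbf{Measurability and the measure-preserving property.} The statement that $st$ is measurable and measure preserving is a standard feature of the Loeb construction, originating with Anderson's work on Loeb measures and the standard part map. First I would verify that for a basic interval $[a,b) \subseteq [-1,1]$ with standard endpoints, the preimage $st^{-1}([a,b))$ is $L(\mathfrak{C}_\eta)$-measurable. The key observation is that $st^{-1}([a,b))$ differs from an internal set of the form $\bigcup \{[{i\over\eta},{i+1\over\eta}) : a \leq {i\over\eta}, {i+1\over\eta} < b \}$ only by points whose standard part lies in the infinitesimal neighborhoods of $a$ and $b$. Because each such boundary layer has Loeb measure zero (the counting measure of an infinitesimal collar is infinitesimal, hence has standard part $0$), one sees that $L(\lambda_\eta)(st^{-1}([a,b))) = b - a = \mu([a,b))$. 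Since the half-open intervals with standard endpoints generate $\mathfrak{B}$, a monotone class / $\pi$-$\lambda$ argument upgrades this from the generating algebra to all of $\mathfrak{B}$, giving both measurability of $st$ and the identity $L(\lambda_\eta)(st^{-1}(B)) = \mu(B)$ for all $B \in \mathfrak{B}$. The endpoint identification at $\{-1,1\}$ contributes only a single pair of points of measure zero and does not affect the computation.

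\medskip

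\textbf{The integral identity.} Once $st$ is measure preserving, the integral transfer is essentially the abstract change-of-variables theorem for measure-preserving maps. For $g \in C^\infty(S)$, the composite $G \circ st$ is measurable as a composition of measurable maps, and $(G \circ st)^*(g) = g \circ G \circ st$ is bounded (indeed $g$ is continuous on the compact $S$, hence bounded), so it is $L(\lambda_\eta)$-integrable because $L(\lambda_\eta)$ is a finite measure. The change-of-variables formula then gives
\[
\int_{\overline{S}_\eta} (G \circ st)^*(g)\, dL(\lambda_\eta) = \int_{[-1,1]} G^*(g)\, d\mu,
\]
and the final equality $\int_{[-1,1]} G^*(g)\, d\mu = \int_S g\, d\mu$ is immediate from the fact, recorded in Definition \ref{spaces}, that $G$ is itself continuous, measurable and measure preserving.

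\medskip

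\textbf{Main obstacle.} I expect the genuine technical content to lie entirely in the measure-preserving claim for $st$, and specifically in the careful handling of the boundary collars: one must argue that the set of $\tau \in \overline{S}_\eta$ whose standard part equals a fixed real $r$ (an infinitesimal neighborhood) is Loeb-null, and that the internal approximation error between $st^{-1}([a,b))$ and a $\mathfrak{C}_\eta$-set is absorbed into such null collars. This is exactly where the passage from $\lambda_\eta$ to its standard part and the completion of the Loeb $\sigma$-algebra are used: $st^{-1}(B)$ need not itself lie in $\sigma(\mathfrak{C}_\eta)$, but it lies in the completion $L(\mathfrak{C}_\eta)$ because it is sandwiched between internal sets whose Loeb measures agree. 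All subsequent steps are then formal applications of standard measure-theoretic transfer results.
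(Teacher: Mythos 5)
Your proposal is correct and follows essentially the same route as the paper: the paper's proof is simply a citation of Theorem 0.7 of \cite{dep1} (or Theorem 14 of \cite{and}) for the measurability and measure-preserving property of $st$, together with the remark in Definition \ref{spaces} that $G$ is measure preserving, and your argument (internal approximation of $st^{-1}$ of standard intervals, Loeb-null boundary collars, a $\pi$-$\lambda$ extension, then change of variables for the bounded continuous integrand) is precisely the content of those cited results. You have in effect written out the details the paper delegates to its references, so there is nothing to correct.
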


\begin{proof}
Using the proof of Theorem 0.7 in \cite{dep1}, or Theorem 14 of \cite{and}, and the remark in Definition \ref{spaces}.
\end{proof}

We recall Definition 0.8, Theorem 0.9, Definition 0.10, Theorem 0.11 and Lemma 0.12 from \cite{dep1}.

\begin{defn}
\label{disccoeff}
Let $n\in\mathcal{N}_{>0}$, let $\mathfrak{G}$, $G_{n,n}$ be as in Definition 0.8 of \cite{dep1}, and $g\in L^{1}(G_{n,n})$, with respect to the probability measure $\mu_{G}$. Let $\lambda_{G}$ be the rescaled measure, given by $\lambda_{G}=2\mu_{G}$. For $m\in\mathcal{N}$, with $-n\leq m\leq n-1$, we define the $m$'th discrete Fourier coefficient $\hat{g}(m)\in\mathcal{C}$ by;\\

$\hat{g}(m)=\int_{G_{n,n}}g(x)exp(-\pi ixm)d\lambda_{G}$ $(x\in G_{n,n})$\\

\end{defn}

We then have;\\

\begin{theorem}{Inversion Theorem for $G_{n,n}$}\\
\label{discinversion}

Let $\{G_{n,n},\lambda_{G},g,\hat{g}(m)\}$ be as in Definition \ref{disccoeff}, then;\\

$g(x)={1\over 2}\sum_{-n\leq m\leq n-1}\hat{g}(m)exp(\pi ixm)$\\

\end{theorem}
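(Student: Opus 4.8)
The plan is to reduce the statement to the elementary orthogonality relation for roots of unity, exactly as in the classical inversion for the discrete Fourier transform. First I would unpack the definition of $G_{n,n}$ from Definition 0.8 of \cite{dep1}: paralleling the hyperfinite algebra $\mathfrak{C}_{\eta}$ of Definition \ref{spaces}, it consists of the $2n$ equally spaced points $x_{j}={j\over n}$ for $-n\leq j\leq n-1$, and $\mu_{G}$ is the uniform probability measure assigning mass ${1\over 2n}$ to each point, so that the rescaled measure $\lambda_{G}=2\mu_{G}$ assigns mass ${1\over n}$ to each point (and total mass $2$, matching $\lambda_{\eta}({\overline{S}}_{\eta})=2$). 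With this in hand the integral defining the discrete Fourier coefficient becomes a finite sum,
$$\hat{g}(m)={1\over n}\sum_{j=-n}^{n-1}g(x_{j})\,exp(-\pi ix_{j}m).$$

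Next I would substitute this expression into the right hand side of the claimed identity and interchange the two finite sums, which is automatic since both are finite. Fixing a point $x_{k}\in G_{n,n}$, this yields
$${1\over 2}\sum_{-n\leq m\leq n-1}\hat{g}(m)\,exp(\pi ix_{k}m)={1\over 2n}\sum_{j=-n}^{n-1}g(x_{j})\sum_{m=-n}^{n-1}exp\!\left(\pi i{k-j\over n}m\right),$$
so that everything reduces to evaluating the inner sum over $m$ for each fixed pair $(j,k)$.

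The key step, and the only point requiring any care, is the evaluation of this inner geometric sum. Writing $\omega=exp\!\left({\pi i(k-j)\over n}\right)$, one observes that $\omega$ is a $2n$'th root of unity, since $\omega^{2n}=exp(2\pi i(k-j))=1$ for integer $k-j$. As $j$ and $k$ both lie in $\{-n,\ldots,n-1\}$, the difference $k-j$ lies strictly between $-2n$ and $2n$, so $\omega=1$ precisely when $j=k$, and otherwise $\omega$ is a nontrivial $2n$'th root of unity. Summing the geometric series over the $2n$ consecutive exponents $m=-n,\ldots,n-1$ therefore gives $2n$ when $j=k$ and $0$ otherwise; this discrete orthogonality relation is the finite analogue of the orthonormality of the characters recorded in Remark \ref{normal}. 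Substituting back, only the $j=k$ term survives, the factor ${1\over 2n}\cdot 2n$ collapses to $1$, and one is left with exactly $g(x_{k})$, as required.

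I do not anticipate a genuine obstacle, since the whole argument is the standard finite inversion formula and involves no analytic subtlety. The only thing to get right is the bookkeeping of the normalisation constants, namely the factor ${1\over 2}$ in the statement, the mass ${1\over n}$ that $\lambda_{G}$ assigns to each point, and the count $2n$ of points, so that they cancel cleanly; together with the observation that the range of $k-j$ is exactly what forces $\omega=1$ only in the diagonal case $j=k$.
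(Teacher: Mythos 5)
Your proposal is correct, and your reconstruction of $G_{n,n}$ (the $2n$ points $j/n$, $-n\leq j\leq n-1$, with $\mu_{G}$ assigning mass ${1\over 2n}$ and $\lambda_{G}$ mass ${1\over n}$ to each point) matches what the paper's own computations presuppose, as one can check from the substitution $y={m\over n}$ and the masses quoted in the paper's proof. However, you take a genuinely different route. The paper does not evaluate the geometric sum at all: it quotes the abstract Fourier inversion theorem for finite abelian groups (Theorem 0.11 of \cite{dep1}), together with the computation of the characters of $G_{n,n}$ as $\gamma_{y}(x)=\exp(\pi inxy)$ (Lemma 0.12 of \cite{dep1}), and then the entire proof is the bookkeeping step you describe at the end: rewriting $\hat{g}(\gamma_{y})$ as ${1\over 2n}\sum_{w}g(w)\exp(-\pi inwy)$, substituting $y={m\over n}$, and collecting the normalisations into the factor ${1\over 2}$. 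Your argument instead inlines the content of that cited theorem for this particular cyclic group: the orthogonality relation $\sum_{m=-n}^{n-1}\omega^{m}=2n\,\delta_{jk}$ for $\omega=\exp({\pi i(k-j)\over n})$ a $2n$'th root of unity, with the range observation $|k-j|<2n$ forcing $\omega=1$ only on the diagonal, is exactly what proves the abstract inversion formula in this case. What each approach buys: yours is self-contained and elementary, requiring nothing from \cite{dep1} beyond the definition of $G_{n,n}$, which is a real advantage for a reader without access to that reference; the paper's version keeps the argument aligned with the character-theoretic machinery of \cite{dep1}, which is the same framework transferred to the hyperfinite setting in Lemma \ref{nsinversion}, so the citation-based proof makes the later transfer step look like a uniform instance of one theorem rather than an ad hoc computation. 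One cosmetic point: the orthonormality you invoke is the finite analogue of the character orthogonality underlying Theorem 0.11 of \cite{dep1}, not really of the $L^{2}$ basis statement in Remark \ref{normal}, though the two are of course related.
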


\begin{proof}
By Lemma 0.12 of \cite{dep1}, the characters on $G_{n,n}$ are given by;\\

$\gamma_{y}(x)=exp({\pi in^{2}\over n}xy)=exp(\pi inxy)$ $(*)$\\

for $x,y\in G_{n,n}$. Using Definition 0.10 of \cite{dep1}, and the fact that $\mu_{G}(x)={1\over 2n}$, for $x\in G_{n,n}$, we have;\\

$\hat{g}(\gamma_{y})={1\over 2n}\sum_{w\in G_{n,n}}g(w)exp(-\pi inwy)$ $(**)$\\

By Theorem 0.11 of \cite{dep1}, $(*)$, $(**)$ and the fact that $\lambda_{G}(x)={1\over n}$, for $x\in G_{n,n}$;\\

$g(x)=\sum_{y\in G_{n,n}}\hat{g}(\gamma_{y})\gamma_{y}(x)$\\

$=\sum_{y\in G_{n,n}}\hat{g}(\gamma_{y})exp(\pi inxy)$\\

$={1\over 2}\sum_{y\in G_{n,n}}({1\over n}\sum_{w\in G_{n,n}}g(w)exp(-\pi inwy))exp(\pi inxy)$\\

$={1\over 2}\sum_{-n\leq m\leq n-1}({1\over n}\sum_{w\in G_{n,n}}g(w)exp(-\pi iwm))exp(\pi ixm)$ $(y={m\over n})$\\

$={1\over 2}\sum_{-n\leq m\leq n-1}\hat{g}(m)exp(\pi ixm)$\\

\end{proof}

\begin{defn}
\label{nscoeff}
We adopt similar notation to Definition 0.15 of \cite{dep1}. $({\overline{\mathcal{S}}}_{\eta},\mathfrak{C}_{\eta},\lambda_{\eta})$ are as in Definition \ref{spaces}. We define;\\

$\overline{\mathcal{Z}}_{\eta}=\{m\in{^{*}{\mathcal{N}}}:-\eta\leq m\leq\eta-1\}$\\

$\mathfrak{D}_{\eta}$ will denote the ${^{*}}$-finite algebra on $\overline{\mathcal{Z}}_{\eta}$, consisting of internal subsets, with counting measure ${\delta}_{\eta}$, defined by $\delta_{\eta}(m)=1$. We let $\mathfrak{E}_{\eta}$ denote the ${^{*}}$-finite algebra on ${\overline{{\mathcal S}_{\eta}}}\times{\overline{{\mathcal Z}_{\eta}}}$, consisting of internal unions of the form $[{i\over\eta},{i+1\over\eta})\times m$, $-\eta\leq i,m\leq \eta-1$,  and let $\epsilon_{\eta}$ be the counting measure $\lambda_{\eta}\times\delta_{\eta}$.\\

We let ${^{*}exp(\pi i xt)},{^{*}exp(-\pi i xt)}:{^{*}{\mathcal{S}}}\times{^{*}{\mathcal{Z}}}\rightarrow{^{*}{\mathcal C}}$ be the transfers of the functions $exp(\pi ixt),exp(-\pi ixt):{\mathcal S}\times\mathcal{Z}\rightarrow{\mathcal C}$, and use the same notation to denote the restrictions of the transfers to ${\overline{{\mathcal S}_{\eta}}}\times\overline{\mathcal{Z}}_{\eta}$.\\

We let $exp_{\eta}(\pi ixt),exp_{\eta}(-\pi ixt):{\overline{{\mathcal S}_{\eta}}}\times{\overline{{\mathcal Z}_{\eta}}}\rightarrow{^{*}{\mathcal C}}$ denote their $\mathfrak{E}_{\eta}$-measurable counterparts, defined by;\\

$exp_{\eta}(\pi ixt)={^{*}exp(\pi i{[\eta x]\over\eta}t)}$, $(x,t)\in{\overline{{\mathcal S}_{\eta}}}\times{\overline{{\mathcal Z}_{\eta}}}$\\

and, similarly, for $exp_{\eta}(-\pi ixt)$. Given $f:\overline{{\mathcal S}_{\eta}}\rightarrow{^{*}\mathcal{C}}$, which is $\mathfrak{C}_{\eta}$-measurable, we define the nonstandard $m$'th Fourier coefficient of $f$, for $m\in\overline{\mathcal{Z}}_{\eta}$, by;\\

$\hat{f}_{\eta}(m)=\int_{\overline{{\mathcal S}_{\eta}}}f(x)exp_{\eta}(-\pi ixm) d\lambda_{\eta}$\\

so $\hat{f}_{\eta}:\overline{{\mathcal Z}_{\eta}}\rightarrow{^{*}\mathcal{C}}$ is $\mathfrak{D}_{\eta}$-measurable. $(*)$\\

Given $g:[-1,1]\rightarrow{\mathcal C}$, we let ${^{*}g}:{^{*}[-1,1]}\rightarrow{^{*}{\mathcal{C}}}$ denote its transfer and its restriction to $\overline{{\mathcal S}_{\eta}}$. We let $g_{\eta}$ denote its $\mathfrak{C}_{\eta}$-measurable counterpart, as above, and let $\hat{g}_{\eta}$ be as in $(*)$.\\

For $n\in\mathcal{N}$, we let $\mathcal{S}_{n}=[-1,1)$. We let $\mathfrak{C}_{n,st}$ consist of all finite unions of intervals of the form $[{i\over n},{i+1\over n})$, for $-n\leq i\leq n-1$. $\lambda_{n,st}$ is defined on $\mathfrak{C}_{n,st}$, by setting $\lambda_{n}([{i\over n},{i+1\over n}))={1\over n}$. We let;\\

$\mathcal{Z}_{n}=\{m\in\mathcal{N}:-n\leq m\leq n-1\}$\\

$\{\mathfrak{D}_{n,st},\delta_{n,st},\epsilon_{n,st}, exp_{n,st}(\pi i xt), exp_{n,st}(-\pi i xt)\}$ are all defined as above, restricting to $[-1,1)$ and $\mathcal{Z}$. If $g:[-1,1]\rightarrow{\mathcal C}$, we similarly define, $\{g_{n,st},\hat{g}_{n,st}\}$, ($st$ is suggestive notation for standard). Observe that $\lambda_{n,st}$ is just the restriction of Lebesgue measure $\mu$ to $\mathfrak{C}_{n,st}$, and transfers to $\lambda_{n}$.\\

$\{exp_{n,st}(\pi i xt), exp_{n,st}(-\pi i xt),g_{n,st},\hat{g}_{n,st}\}$ are all standard functions, which transfer to $\{exp_{n}(\pi i xt), exp_{n}(-\pi i xt),g_{n},\hat{g}_{n}\}$.\\
\end{defn}

We now obtain;\\

\begin{lemma}{Inversion Theorem for ${\overline{\mathcal{S}}}_{\eta}$}\\

\label{nsinversion}
Let $\{{\overline{\mathcal{S}}}_{\eta},{\overline{\mathcal{Z}}}_{\eta},f,\hat{f}_{\eta}\}$ be as in Definition \ref{nscoeff}, then;\\

$f(x)={1\over 2}\sum_{m\in{\overline{\mathcal{Z}}}_{\eta}}\hat{f}_{\eta}(m)exp_{\eta}(\pi ixm)$ $(x\in{\overline{\mathcal{S}}}_{\eta})$\\
\end{lemma}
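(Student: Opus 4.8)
The plan is to deduce the Inversion Theorem for $\overline{\mathcal{S}}_\eta$ directly from the Inversion Theorem for $G_{n,n}$ (Theorem \ref{discinversion}) by transfer. The key observation is that the hyperfinite objects $\{\overline{\mathcal{S}}_\eta,\overline{\mathcal{Z}}_\eta,\mathfrak{C}_\eta,\mathfrak{D}_\eta,\lambda_\eta,\delta_\eta\}$ together with the functions $exp_\eta(\pm\pi ixt)$ and the coefficient operator $f\mapsto\hat{f}_\eta$ are, by their construction in Definition \ref{nscoeff}, precisely the internal counterparts of the standard-indexed family $\{\mathcal{S}_n,\mathcal{Z}_n,\mathfrak{C}_{n,st},\mathfrak{D}_{n,st},\lambda_{n,st},\delta_{n,st},exp_{n,st}(\pm\pi ixt),\hat{g}_{n,st}\}$. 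Up to the measure rescaling recorded in Definition \ref{disccoeff} (where $\lambda_G=2\mu_G$, so that the normalising constants $\tfrac12$ and $\tfrac1n$ match those appearing in the chain of equalities in the proof of Theorem \ref{discinversion}), the finite grid $G_{n,n}$ with measure $\lambda_G$ is exactly the standard system on $\mathcal{S}_n$ with measure $\lambda_{n,st}$.

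The first step is therefore to state, as a standard fact about the finite grids $\mathcal{S}_n$, the identity
\[
g_{n,st}(x)=\frac12\sum_{m\in\mathcal{Z}_n}\hat{g}_{n,st}(m)\,exp_{n,st}(\pi ixm)\qquad(x\in\mathcal{S}_n),
\]
valid for every $n\in\mathcal{N}_{>0}$ and every $\mathfrak{C}_{n,st}$-measurable $g_{n,st}$. This is nothing other than Theorem \ref{discinversion} reread under the dictionary $G_{n,n}\leftrightarrow\mathcal{S}_n$, $\lambda_G\leftrightarrow\lambda_{n,st}$: the discrete Fourier coefficient $\hat{g}(m)=\int_{G_{n,n}}g(x)\,exp(-\pi ixm)\,d\lambda_G$ becomes $\hat{g}_{n,st}(m)=\sum_{w\in\mathcal{S}_n}\tfrac1n\,g_{n,st}(w)\,exp_{n,st}(-\pi iwm)$, and the inversion formula is the conclusion of Theorem \ref{discinversion}.

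The second step is to transfer. The statement displayed above is a first-order property of the natural number $n$ (quantifying over the finitely many $\mathfrak{C}_{n,st}$-measurable functions, which for fixed $n$ is an internal family), so by the transfer principle it holds with $n$ replaced by any $\eta\in{}^{*}\mathcal{N}\setminus\mathcal{N}$. Concretely, Definition \ref{nscoeff} asserts that $exp_{n,st}(\pm\pi ixt)$, $g_{n,st}$ and $\hat{g}_{n,st}$ transfer to $exp_\eta(\pm\pi ixt)$, $g_\eta$ and $\hat{g}_\eta$, and that $\lambda_{n,st}$ transfers to $\lambda_\eta$; so the transferred identity reads exactly
\[
f(x)=\frac12\sum_{m\in\overline{\mathcal{Z}}_\eta}\hat{f}_\eta(m)\,exp_\eta(\pi ixm)\qquad(x\in\overline{\mathcal{S}}_\eta),
\]
which is the claim. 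The only subtlety is that $f$ here is an arbitrary internal $\mathfrak{C}_\eta$-measurable function rather than the transfer of a single standard $g$; this is handled by transferring the \emph{universally quantified} version of the finite statement, quantifying over all $\mathfrak{C}_{n,st}$-measurable functions before taking the transfer.

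The main obstacle I expect is purely bookkeeping rather than conceptual: one must verify carefully that the index substitution $y=\tfrac{m}{n}$ used in the proof of Theorem \ref{discinversion} to re-express the sum over characters $\gamma_y$ as a sum over $m\in\mathcal{Z}_n$ is set up so that it transfers cleanly, i.e.\ that the characters on the hyperfinite group $\overline{\mathcal{S}}_\eta$ are indexed by $\overline{\mathcal{Z}}_\eta$ via $y=\tfrac{m}{\eta}$ and that $exp_\eta(\pi ixm)$ is genuinely the transfer of the standard character $exp_{n,st}(\pi ixm)$ under this indexing. Once the correspondence of normalising constants and the character indexing are pinned down, the result is immediate from transfer, and no hyperfinite summation needs to be carried out by hand.
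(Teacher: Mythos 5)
Your proposal is correct and follows essentially the same route as the paper: the paper's proof likewise reduces both sides to grid points (replacing $x$ by ${[\eta x]\over\eta}$, which your identification of $\mathfrak{C}_{n,st}$-measurable functions with functions on the finite grid handles implicitly) and then transfers Theorem \ref{discinversion}, using the definitions of the internal integral and hyperfinite sum together with the explicit remark in Definition \ref{nscoeff} that the standard objects transfer to their $\eta$-counterparts. One small wording slip: there are not ``finitely many'' $\mathfrak{C}_{n,st}$-measurable functions for fixed $n$ (they form a $2n$-dimensional family), but since each such function is determined by its values on the $2n$ grid points, quantifying over them is first-order and your transfer of the universally quantified statement goes through exactly as you say.
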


\begin{proof}
As $f(x)$ and $exp_{\eta}(\pi ixm)$ are both $\mathfrak{C}_{\eta}$-measurable, the two sides of the equation are unchanged if we replace $x$ by ${[\eta x]\over\eta}$. Now the result follows, by transfer, from Theorem \ref{discinversion}, the definition of the internal integral $\int_{\overline{\mathcal{S}}_{\eta}}$ on $\overline{\mathcal{S}}_{\eta}$, see Definition 1.3 of \cite{dep1}, and the definition of a hyperfinite sum $\sum_{m\in{\overline{\mathcal{Z}}}_{\eta}}$ on $\overline{\mathcal{Z}}_{\eta}$, see Definition 2.19 of \cite{dep2}. Again, the reader should consider footnote 5 of \cite{dep1}.
\end{proof}

We now specialise the result of Lemma \ref{nsinversion} to $(\overline{\mathcal{S}}_{\eta},L(\mathfrak{C}_{\eta}),L(\lambda_{\eta}))$ and $(\overline{\mathcal{Z}}_{\eta},L(\mathfrak{D}_{\eta}),L(\delta_{\eta}))$. As in \cite{dep1}, the problem is to obtain the $S$-integrability conditions.\\

\begin{theorem}
\label{gSinteg}
Let $g\in C^{\infty}(S)$, and let $g$ also denote the pullback $\Gamma^{*}(g)\in C^{\infty}([0,1])$, then $g_{\eta}$, as given in Definition \ref{nscoeff}, is $S$-integrable on $\overline{\mathcal{S}}_{\eta}$. Moreover, ${^{\circ}g_{\eta}}=st^{*}(g)$, everywhere $L(\lambda_{\eta})$, and;\\

${^{\circ}\int_{\overline{{\mathcal S}_{\eta}}}g_{\eta} d\lambda_{\eta}}=\int_{\overline{{\mathcal S}_{\eta}}}st^{*}(g) d L(\lambda_{\eta})=\int_{S} g d\mu$\\
\end{theorem}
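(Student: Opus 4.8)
The plan is to exploit that $g\in C^{\infty}(S)$ is bounded and uniformly continuous on the compact circle $S$; this makes the $S$-integrability of $g_{\eta}$ essentially automatic and reduces the identification of standard parts to continuity. First I would record that, since $S$ is compact and $g$ continuous, there is a standard $M\in\mathcal{R}_{>0}$ with $|g|\leq M$ on $S$. By transfer $|{^{*}g}|\leq M$ on ${^{*}S}$, and since $g_{\eta}(x)={^{*}g}({[\eta x]\over\eta})$ with ${[\eta x]\over\eta}\in\overline{\mathcal{S}}_{\eta}\subseteq{^{*}S}$, we obtain $|g_{\eta}|\leq M$ on $\overline{\mathcal{S}}_{\eta}$. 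For the $S$-integrability, note that for any internal $A\in\mathfrak{C}_{\eta}$ we have $\int_{A}|g_{\eta}|\,d\lambda_{\eta}\leq M\lambda_{\eta}(A)$; in particular $\int_{\overline{\mathcal{S}}_{\eta}}|g_{\eta}|\,d\lambda_{\eta}\leq 2M$ is limited, and $\lambda_{\eta}(A)\approx 0$ forces $\int_{A}|g_{\eta}|\,d\lambda_{\eta}\approx 0$. These are precisely the two clauses defining $S$-integrability, so $g_{\eta}$ is $S$-integrable. (For complex $g$ one separates real and imaginary parts, or applies the bound to $|g_{\eta}|$ directly.)

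Next I would establish the pointwise identification ${^{\circ}g_{\eta}}=st^{*}(g)$. For every $x\in\overline{\mathcal{S}}_{\eta}$ the rounded point ${[\eta x]\over\eta}$ differs from $x$ by at most ${1\over\eta}\approx 0$, so $st({[\eta x]\over\eta})=st(x)\in[-1,1]$. The nonstandard characterisation of continuity of $g$ at $st(x)$ then yields ${^{*}g}({[\eta x]\over\eta})\approx g(st(x))$, whence ${^{\circ}g_{\eta}}(x)=g(st(x))=st^{*}(g)(x)$. Here I must use that $g$ is continuous as a function on the \emph{circle}, so that the identified endpoints cause no difficulty: if $st(x)=1$ (with $x$ infinitesimally below $1$) or $st(x)=-1$, continuity gives the common value $g(1)=g(-1)$ in either case. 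This argument is valid at every point, so the identification holds everywhere, not merely $L(\lambda_{\eta})$-a.e.

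Finally I would assemble the chain of integral equalities. The first equality ${^{\circ}\int_{\overline{\mathcal{S}}_{\eta}}g_{\eta}\,d\lambda_{\eta}}=\int_{\overline{\mathcal{S}}_{\eta}}st^{*}(g)\,dL(\lambda_{\eta})$ follows from the standard Loeb--Anderson theorem that an $S$-integrable internal function has a Loeb-integrable standard part with matching integral (as invoked in \cite{dep1}), combined with the identification ${^{\circ}g_{\eta}}=st^{*}(g)$ of the previous step, again separating real and imaginary parts. The second equality $\int_{\overline{\mathcal{S}}_{\eta}}st^{*}(g)\,dL(\lambda_{\eta})=\int_{S}g\,d\mu$ is exactly the content of Lemma \ref{stpart}, since $st$ is measure preserving and $st^{*}(g)$ is its integrable pullback.

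I expect no deep obstacle here: the whole argument rests only on the boundedness and continuity of the smooth $g$, which trivialise the $S$-integrability estimates. The sole points demanding care are the \emph{everywhere} (rather than almost everywhere) identification of ${^{\circ}g_{\eta}}$ with $st^{*}(g)$, which needs continuity across the identified boundary of the circle, and the bookkeeping for complex values. The genuinely delicate $S$-integrability estimates are deferred to the later treatment of the Dirichlet-type kernels, where the uniform bound $M$ is no longer available.
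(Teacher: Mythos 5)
Your proof is correct, but it routes the argument differently from the paper. The paper first establishes the near-equality of integrals, $\int_{\overline{\mathcal{S}}_{\eta}}g_{\eta}\,d\lambda_{\eta}\simeq\int_{S}g\,d\mu$, by transferring Darboux's theorem on the convergence of Riemann sums (the estimate $|\int_{-1}^{1}g\,d\mu-\int_{-1}^{1}g_{n,st}\,d\lambda_{n,st}|<\epsilon$ for $n\geq M$), then combines this with the same continuity argument you give for ${^{\circ}g_{\eta}}=st^{*}(g)$, and only at the end \emph{deduces} $S$-integrability from the matching of the two integrals via the converse criterion cited as Remarks 3.21 of \cite{dep2}. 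You run the logic in the opposite direction: you get $S$-integrability first and for free, from the standard uniform bound $|g_{\eta}|\leq M$ together with the limited total mass $\lambda_{\eta}(\overline{\mathcal{S}}_{\eta})=2$ (on a finite internal measure space a uniformly bounded internal function is trivially $S$-integrable), and then obtain the first integral equality from the forward Loeb--Anderson theorem and the second from Lemma \ref{stpart}. Your route is more elementary and self-contained: it needs neither Darboux's theorem nor the converse $S$-integrability criterion. What the paper's route buys is uniformity of method: the Darboux-transfer pattern is lifted from Theorem 0.17 of \cite{dep1}, where it is needed on the hyperfinite line $\overline{\mathcal{R}}_{\eta}$ (infinite total measure, no usable uniform bound), and the same pattern recurs for $\hat{g}_{\eta}$ in Theorem \ref{transgSinteg}, where tail estimates replace boundedness --- a limitation of your shortcut that you correctly flag in your closing remark. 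Your handling of the endpoint identification ($g(-1)=g(1)$, continuity on the circle giving the identification everywhere rather than merely a.e.) matches the paper's appeal to continuity on $[-1,1]$ via Theorem 1.6 of \cite{dep1}.
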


\begin{proof}
The proof is essentially contained in Theorem 0.17 of \cite{dep1}, but we give it, here, for the convenience of the reader. As $g$ is continuous on the interval $[-1,1)$, by Darboux's Theorem, see \cite{bs}, there exists $M\in\mathcal{N}$, such that for all $n\geq M$;\\

$|\int_{-1}^{1}g d\mu-\int_{-1}^{1}g_{n,st}d\lambda_{n,st}|<\epsilon$ $(*)$\\

Transferring the result $(*)$, see \cite{dep1} (Theorem 0.17), and using Lemma \ref{stpart}, we obtain;\\

$\int_{\overline{{\mathcal S}_{\eta}}}g_{\eta} d\lambda_{\eta}\simeq \int_{S} g d\mu=\int_{\overline{{\mathcal S}_{\eta}}} st^{*}(g) dL(\lambda_{\eta})$ $(**)$\\

As $g$ is continuous on $[-1,1]$, by \cite{dep1} (Theorem 1.6), we have $g_{\eta}(x)={^{*}g}({[\eta x]\over\eta})\simeq g({^{\circ}x})$, for all $x\in\overline{\mathcal{S}}_{\eta}$. Hence, ${^{\circ}g_{\eta}}=st^{*}(g)$ on $\overline{\mathcal{S}}_{\eta}$, $(***)$. We have that $g_{\eta}$ is $\mathfrak{C}_{\eta}$-measurable, and by $(**),(***)$, ${^{\circ}g_{\eta}}$ is integrable $L(\lambda_{\eta})$ and;\\

${^{\circ}\int_{\overline{{\mathcal S}_{\eta}}}g_{\eta} d\lambda_{\eta}}=\int_{\overline{{\mathcal S}_{\eta}}}{^{\circ}g_{\eta}}d L(\lambda_{\eta})$ $(\dag)$\\

Using Remarks 3.21 of \cite{dep2}, it follows that $g_{\eta}$ is $S$-integrable, and, clearly, the rest of the Theorem follows from $(**),(***),(\dag)$.

\end{proof}

We now show a corresponding result for $\hat{g}_{\eta}$. We require the following, observe that the definitions are slightly adjusted from Definition 0.18 of \cite{dep1}.

\begin{defn}
\label{discretederivshift}
If $n\in\mathcal{N}$, and $g_{n,st}$ is $\mathfrak{C}_{n,st}$-measurable, we define the discrete derivative $g'_{n,st}$ by;\\

$g'_{n,st}({j\over n})=n(g_{n,st}({j+1\over n})-g_{n,st}({j\over n}))$ $(-n\leq j<n-1)$\\

$g'_{n,st}({n-1\over n})=0$\\

$g'_{n,st}(x)=g'_{n,st}({[nx]\over n})$   $(x\in\mathcal{S}_{n})$\\

and the shift $g^{sh}_{n,st}$ by;\\

$g^{sh}_{n,st}({j\over n})=g_{n,st}({j+1\over n})$  $(-n\leq j<n-1)$\\

$g^{sh}_{n,st}({n-1\over n})=0$\\

$g^{sh}_{n,st}(x)=g^{sh}_{n,st}({[nx]\over n})$   $(x\in\mathcal{S}_{n})$\\

So both are $\mathfrak{C}_{n,st}$-measurable.

\end{defn}

\begin{lemma}{Discrete Calculus Lemmas}\\
\label{disccalc}

Let $\{g_{n,st},h_{n,st}\}$ be $\mathfrak{C}_{n,st}$-measurable and let $\{g'_{n,st},h'_{n,st},g^{sh}_{n,st},h^{sh}_{n,st}\}$ be as in Definition \ref{discretederivshift}. Then;\\

$(i)$. $\int_{\mathcal{S}_{n}}g'_{n,st} d\lambda_{n,st}=g_{n,st}({n-1\over n})-g_{n,st}(-1)$\\

$(ii)$. $(g_{n,st}h_{n,st})'=g'_{n,st}h^{sh}_{n,st}+g_{n,st}h'_{n,st}$\\

$(iii)$. $\int_{\mathcal{S}_{n}}g'_{n,st}h_{n,st}d\lambda_{n,st}=-\int_{\mathcal{S}_{n}}g^{sh}_{n,st}h'_{n,st}d\lambda_{n,st}+gh_{n,st}({n-1\over n})-gh_{n,st}(-1)$\\
\end{lemma}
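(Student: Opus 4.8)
The plan is to treat the three identities as the exact discrete counterparts of the Fundamental Theorem of Calculus, the Leibniz product rule, and integration by parts (summation by parts), proving $(i)$ and $(ii)$ directly and then obtaining $(iii)$ by feeding $(ii)$ into $(i)$. None of the three requires anything beyond finite algebra together with careful bookkeeping of the endpoint conventions, so I would organize the argument to make those conventions do the work.

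For $(i)$, I would unfold the integral against the counting measure as the finite sum $\int_{\mathcal{S}_n} g'_{n,st}\,d\lambda_{n,st} = \frac{1}{n}\sum_{j=-n}^{n-1} g'_{n,st}(\frac{j}{n})$. Substituting $g'_{n,st}(\frac{j}{n}) = n(g_{n,st}(\frac{j+1}{n}) - g_{n,st}(\frac{j}{n}))$ for $-n \le j < n-1$ cancels the weight $\frac{1}{n}$ against the factor $n$ in the discrete derivative, while the convention $g'_{n,st}(\frac{n-1}{n}) = 0$ removes the top term. The surviving sum $\sum_{j=-n}^{n-2}\bigl(g_{n,st}(\frac{j+1}{n}) - g_{n,st}(\frac{j}{n})\bigr)$ telescopes to $g_{n,st}(\frac{n-1}{n}) - g_{n,st}(\frac{-n}{n})$, which is the claimed expression since $\frac{-n}{n} = -1$.

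For $(ii)$, I would check the identity pointwise at each node. At an interior node $\frac{j}{n}$ with $-n \le j < n-1$, I would expand the left side $n\bigl(g_{n,st}(\frac{j+1}{n})h_{n,st}(\frac{j+1}{n}) - g_{n,st}(\frac{j}{n})h_{n,st}(\frac{j}{n})\bigr)$ and insert $\pm\, g_{n,st}(\frac{j}{n})h_{n,st}(\frac{j+1}{n})$; this splits it precisely into $g'_{n,st}(\frac{j}{n})h^{sh}_{n,st}(\frac{j}{n}) + g_{n,st}(\frac{j}{n})h'_{n,st}(\frac{j}{n})$, using $h^{sh}_{n,st}(\frac{j}{n}) = h_{n,st}(\frac{j+1}{n})$. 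At the top node $\frac{n-1}{n}$ both sides vanish, since $g'_{n,st}$, $h'_{n,st}$ and $h^{sh}_{n,st}$ are all set to $0$ there while $(g_{n,st}h_{n,st})'$ is $0$ by definition.

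For $(iii)$, I would apply $(i)$ to the $\mathfrak{C}_{n,st}$-measurable product $g_{n,st}h_{n,st}$, yielding $\int_{\mathcal{S}_n}(g_{n,st}h_{n,st})'\,d\lambda_{n,st} = (g_{n,st}h_{n,st})(\frac{n-1}{n}) - (g_{n,st}h_{n,st})(-1)$, and then rewrite the integrand via $(ii)$. The only point needing care is that $(iii)$ places the shift on $g$ whereas the direct form of $(ii)$ places it on $h$; I would therefore use the symmetric form $(g_{n,st}h_{n,st})' = g'_{n,st}h_{n,st} + g^{sh}_{n,st}h'_{n,st}$, obtained by interchanging $g$ and $h$ in $(ii)$ and commuting the products, which is legitimate since pointwise multiplication is commutative. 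Integrating term by term and rearranging then gives $(iii)$ exactly. I do not anticipate a genuine obstacle: the two things to watch are the interaction of the factor $n$ in the discrete derivative with the measure weight $\frac{1}{n}$, and the boundary conventions at $\frac{n-1}{n}$, and it is precisely these that make the telescoping in $(i)$ and the boundary terms in $(iii)$ come out cleanly.
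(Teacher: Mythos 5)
Your proposal is correct and matches the paper's proof in substance: the paper simply cites Lemma 0.19 of \cite{dep1} (with $n$ replacing $n^{2}$ in the grid spacing), which is exactly the telescoping argument for $(i)$, the add-and-subtract cross-term expansion for $(ii)$, and the derivation of $(iii)$ by applying $(i)$ to the product and using the product rule, with the same handling of the boundary conventions at ${n-1\over n}$. Your only addition is to write this out self-containedly, correctly noting that $(iii)$ requires the symmetrized form $(g_{n,st}h_{n,st})'=g'_{n,st}h_{n,st}+g^{sh}_{n,st}h'_{n,st}$ obtained by swapping $g$ and $h$ in $(ii)$.
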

\begin{proof}
Just use Lemma 0.19 of \cite{dep1}, with $n$ replacing $n^{2}$ in the proof.

\end{proof}

\begin{defn}
\label{functiontails}

For $n\in\mathcal{N}$, we let $\phi_{n}:\mathcal{N}\rightarrow\mathcal{C}$ be defined by $\phi_{n}(m)=n(exp({-\pi im\over n})-1)$, and let $\psi_{n}:\mathcal{N}\rightarrow\mathcal{C}$ be defined by $\psi_{n}(m)=n(exp({\pi im\over n})-1)$. Then, restricting to $\mathcal{Z}_{n}$, $\{\phi_{n},\psi_{n}\}$ are $\mathfrak{D}_{n,st}$-measurable on $\mathcal{Z}_{n}$.
If $g_{n,st}$ is $\mathfrak{C}_{n,st}$-measurable, for $-n\leq m\leq n-1$, we let;\\

$C_{n}(m)=g_{n,st}({n-1\over n})exp_{n,st}(-\pi i{n-1\over n}m)-g_{n,st}(-1)exp_{n,st}(-\pi i(-1)m)$\\

$D_{n}(m)=-{1\over n}g_{n,st}(-1)exp_{n,st}(\pi i{m\over n})exp_{n,st}(-\pi i(-1)m)$.\\

$C'_{n}(m)=-g'_{n,st}(-1)exp_{n,st}(-\pi i(-1)m)$\\

$D'_{n}(m)=-{1\over n}g'_{n,st}(-1)exp_{n,st}(\pi i{m\over n})exp_{n,st}(-\pi i(-1)m)$.\\

$E_{n}(m)=\phi_{n}(m)D_{n}(m)-C_{n}(m)$\\

$E'_{n}(m)=\phi_{n}(m)D'_{n}(m)-C'_{n}(m)$\\

$F_{n}(m)=\psi_{n}(m)\phi_{n}(m)D_{n}(m)-\psi_{n}(m)C_{n}(m)+\phi_{n}(m)D'_{n}(m)-C'_{n}(m)$\\

considered as $\mathfrak{D}_{n,st}$-measurable functions.
\end{defn}

\begin{lemma}{Discrete Fourier transform}\\
\label{dft}

Let $g_{n,st}$ be $\mathfrak{C}_{n,st}$-measurable. Then, for $m\neq 0$;\\

$\hat{g}_{n,st}(m)={\hat{g'}_{n,st}(m)+E_{n}(m)\over\psi_{n}(m)}={\hat{g''}_{n,st}(m)+F_{n}(m)\over\psi_{n}^{2}(m)}$\\

\end{lemma}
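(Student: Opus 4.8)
The plan is to treat this as the hyperfinite analogue of the classical relation $\hat{g'}(m)=\pi i m\,\hat{g}(m)$ (up to boundary terms), proved by shifting the discrete derivative off $g_{n,st}$ and onto the exponential kernel via the summation-by-parts identity of Lemma \ref{disccalc}$(iii)$. Concretely, I would start not from $\hat{g}_{n,st}(m)$ but from $\hat{g'}_{n,st}(m)=\int_{\mathcal{S}_n}g'_{n,st}(x)\,exp_{n,st}(-\pi i x m)\,d\lambda_{n,st}$, and apply Lemma \ref{disccalc}$(iii)$ with $g=g_{n,st}$ and $h(x)=exp_{n,st}(-\pi i x m)$. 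The boundary term it produces, $(g_{n,st}h)({n-1\over n})-(g_{n,st}h)(-1)$, is by inspection exactly $C_n(m)$ of Definition \ref{functiontails}, so that $\hat{g'}_{n,st}(m)=-\int_{\mathcal{S}_n}g^{sh}_{n,st}(x)\,h'(x)\,d\lambda_{n,st}+C_n(m)$.

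The substance is in evaluating the remaining integral. First I would record that the discrete derivative of the kernel satisfies $h'({j\over n})=\phi_n(m)\,h({j\over n})$ for $-n\le j<n-1$, straight from the definition of $\phi_n$, while at the top grid point ${n-1\over n}$ both $h'$ and $g^{sh}_{n,st}$ are set to $0$ by Definition \ref{discretederivshift}, so that node contributes nothing. Writing $g^{sh}_{n,st}({j\over n})=g_{n,st}({j+1\over n})$ and reindexing by $k=j+1$ then turns $\int_{\mathcal{S}_n}g^{sh}_{n,st}h'\,d\lambda_{n,st}$ into $\phi_n(m)\,exp(\pi i{m\over n})$ times a sum that equals $\hat{g}_{n,st}(m)$ with only its $k=-n$ term omitted; the correction from that omitted term is precisely $\phi_n(m)D_n(m)$. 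The identity that makes everything collapse is $\phi_n(m)\,exp(\pi i{m\over n})=-\psi_n(m)$, immediate from the definitions of $\phi_n$ and $\psi_n$. Assembling, $\int_{\mathcal{S}_n}g^{sh}_{n,st}h'\,d\lambda_{n,st}=-\psi_n(m)\hat{g}_{n,st}(m)+\phi_n(m)D_n(m)$, whence $\psi_n(m)\hat{g}_{n,st}(m)=\hat{g'}_{n,st}(m)+\phi_n(m)D_n(m)-C_n(m)=\hat{g'}_{n,st}(m)+E_n(m)$. Since $-n\le m\le n-1$ together with $m\neq 0$ forces $exp(\pi i{m\over n})\neq 1$ and hence $\psi_n(m)\neq 0$ (this is exactly where the hypothesis $m\neq 0$ enters), dividing by $\psi_n(m)$ gives the first equality.

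For the second equality I would apply the first one verbatim to $g'_{n,st}$ in place of $g_{n,st}$. The only thing to verify is that the boundary terms degenerate in the right way: because $g'_{n,st}({n-1\over n})=0$ by Definition \ref{discretederivshift}, the top contribution to the boundary drops, and $C_n,D_n$ collapse to the primed quantities $C'_n,D'_n$, giving $\hat{g'}_{n,st}(m)=(\hat{g''}_{n,st}(m)+E'_n(m))/\psi_n(m)$ with $E'_n(m)=\phi_n(m)D'_n(m)-C'_n(m)$. Substituting this into the first equality and clearing one further factor of $\psi_n(m)$ yields $\hat{g}_{n,st}(m)=(\hat{g''}_{n,st}(m)+\psi_n(m)E_n(m)+E'_n(m))/\psi_n^2(m)$, and a direct comparison with Definition \ref{functiontails} shows $F_n(m)=\psi_n(m)E_n(m)+E'_n(m)$, which is the claimed form.

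The main obstacle I anticipate is bookkeeping rather than conceptual: correctly tracking the two endpoint effects in the reindexing — the top node, where the artificial zero values of $g^{sh}_{n,st}$ and of the discrete derivative must be invoked, and the bottom index $k=-n$, whose dropped term must be matched to $\phi_n(m)D_n(m)$ — while simultaneously carrying the sign from the kernel-shift identity $\phi_n(m)\,exp(\pi i{m\over n})=-\psi_n(m)$. Any slip in these boundary adjustments would misidentify $E_n$, and hence $F_n$, so essentially all the care is concentrated in this one finite-sum manipulation.
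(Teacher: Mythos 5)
Your proof is correct, and it is essentially the paper's own argument: the paper's proof of Lemma \ref{dft} simply cites Lemma 0.21 of \cite{dep1} (with $n$ replacing $n^{2}$), and that argument is precisely the discrete summation-by-parts you carry out via Lemma \ref{disccalc}$(iii)$, with the kernel eigenrelation $h'=\phi_{n}(m)h$, the identity $\phi_{n}(m)\,exp(\pi i{m\over n})=-\psi_{n}(m)$, the boundary bookkeeping matching $C_{n},D_{n}$, and iteration plus $F_{n}=\psi_{n}E_{n}+E'_{n}$ for the second equality. Your version just writes out in full the computation the paper outsources, including the correct observation that $-n\leq m\leq n-1$, $m\neq 0$ guarantees $\psi_{n}(m)\neq 0$.
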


\begin{proof}
Again, use Lemma 0.21 of \cite{dep1}, with the simple adjustment of replacing $n^{2}$ by $n$ in the proof.
\end{proof}

\begin{lemma}
\label{unifbounded}
If $g\in C^{\infty}([-1,1])$, with $g(-1)=g(1)=0$, then the functions $\hat{g''}_{n,st}(m)$ and $F_{n}(m)$ are uniformly bounded, independently of $n$, for $n\geq 1$.
\end{lemma}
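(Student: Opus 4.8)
The plan is to use the two endpoint conditions $g(-1)=g(1)=0$ to kill or shrink the boundary contributions in $F_{n}(m)$, and to bound $\hat{g''}_{n,st}(m)$ by $\|g''_{n,st}\|_{L^{1}(\lambda_{n,st})}$, the point being that the single grid point at which the discrete second derivative is of size $O(n)$ carries measure $\frac{1}{n}$. First I would record the arithmetic at the endpoints. Since $g_{n,st}$ agrees with $g$ on the grid, $g(-1)=0$ gives $g_{n,st}(-1)=0$; differentiability together with $g(1)=0$ gives $g_{n,st}(\frac{n-1}{n})=g(1-\frac{1}{n})=-\frac{1}{n}g'(1)+O(\frac{1}{n^{2}})$, so $g_{n,st}(\frac{n-1}{n})=O(\frac{1}{n})$; and $g'_{n,st}(-1)=n\,g(-1+\frac{1}{n})=g'(-1)+O(\frac{1}{n})=O(1)$.

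For $F_{n}(m)$, substituting $g_{n,st}(-1)=0$ annihilates $D_{n}(m)$ and the second summand of $C_{n}(m)$, so that $F_{n}(m)=-\psi_{n}(m)C_{n}(m)+\phi_{n}(m)D'_{n}(m)-C'_{n}(m)$. Writing $|e^{i\theta}-1|=2|\sin\frac{\theta}{2}|$ shows $|\phi_{n}(m)|,|\psi_{n}(m)|\leq 2n$ for $m\in\mathcal{Z}_{n}$, while the exponentials appearing in $C_{n},D'_{n},C'_{n}$ are unimodular. Hence $|\psi_{n}(m)C_{n}(m)|\leq 2n\cdot|g_{n,st}(\frac{n-1}{n})|=O(1)$, $|\phi_{n}(m)D'_{n}(m)|\leq 2n\cdot\frac{1}{n}|g'_{n,st}(-1)|=O(1)$, and $|C'_{n}(m)|=|g'_{n,st}(-1)|=O(1)$, all uniformly in $m$ and $n$. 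Thus $F_{n}(m)$ is uniformly bounded. Note that $g(-1)=0$ is exactly what removes the otherwise $O(n)$ term $\psi_{n}\phi_{n}D_{n}$, and $g(1)=0$ is what turns the otherwise $O(n)$ quantity $\psi_{n}C_{n}$ into $O(1)$; both hypotheses are used here.

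For $\hat{g''}_{n,st}(m)$, the kernel $\exp_{n,st}(-\pi ixm)$ is unimodular, so $|\hat{g''}_{n,st}(m)|\leq\|g''_{n,st}\|_{L^{1}(\lambda_{n,st})}=\frac{1}{n}\sum_{j}|g''_{n,st}(\frac{j}{n})|$, uniformly in $m$; it remains to bound this $L^{1}$-norm in $n$. For the interior indices $-n\leq j\leq n-3$ one has $g''_{n,st}(\frac{j}{n})=n^{2}(g(\frac{j+2}{n})-2g(\frac{j+1}{n})+g(\frac{j}{n}))=g''(\xi_{j})$ for some $\xi_{j}\in(\frac{j}{n},\frac{j+2}{n})$, by the mean value form of the second difference, so these are bounded by $\sup_{[-1,1]}|g''|$ and contribute at most $2\sup_{[-1,1]}|g''|$ in total. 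At $j=n-1$ the convention gives $g''_{n,st}(\frac{n-1}{n})=0$, and the only exceptional index is $j=n-2$, where $g'_{n,st}(\frac{n-1}{n})=0$ forces $g''_{n,st}(\frac{n-2}{n})=-n\,g'_{n,st}(\frac{n-2}{n})=O(n)$; but this single point has measure $\frac{1}{n}$, so it contributes $O(1)$. Summing gives $\|g''_{n,st}\|_{L^{1}}=O(1)$ and the claim.

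I expect the main obstacle to be the bookkeeping at the right endpoint: the convention $g'_{n,st}(\frac{n-1}{n})=0$ makes the discrete second derivative as large as $O(n)$ at $x=\frac{n-2}{n}$, and the substance of the argument is the observation that this growth is exactly matched by the factor $\frac{1}{n}$ coming from $\lambda_{n,st}$, so the uniform bound survives. Correspondingly, for $F_{n}(m)$ the care lies in checking that the two endpoint conditions cancel precisely the two terms that would otherwise grow linearly in $n$; once these cancellations are identified, the remaining estimates are routine, and the bounds hold uniformly for all $n\geq 1$.
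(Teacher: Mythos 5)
Your proof is correct, and it splits naturally into two halves that compare differently with the paper. For $F_{n}(m)$ your argument is essentially the paper's own: the paper likewise uses $g(-1)=0$ to kill $D_{n}(m)$ and reduce $C_{n}(m)$ to the single term $g_{n,st}({n-1\over n})$, bounds $|\phi_{n}|,|\psi_{n}|\leq 2n$, and then applies the mean value theorem at both endpoints (via $-ng_{n,st}({n-1\over n})=g'(c_{n})$ and $ng_{n,st}({1-n\over n})=g'(d_{n})$) to land on $|F_{n}(m)|\leq 5D$ with $D=\|g'\|_{C(\mathcal{S})}$; your Taylor expansions $g(1-{1\over n})=-{1\over n}g'(1)+O({1\over n^{2}})$ and $ng(-1+{1\over n})=g'(-1)+O({1\over n})$ are the same estimate in slightly different clothing. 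For $\hat{g''}_{n,st}(m)$, however, the paper does not argue at all: it imports the bound $|\hat{g''}_{n,st}(m)|\leq M+2B$ (with $M=\|g''\|_{L^{1}(\mathcal{S})}$, $B=\|g\|_{C(\mathcal{S})}$) by citing Lemma 0.22 of the companion paper with $n^{2}$ replaced by $n$, whereas you prove it from scratch: the unimodularity of $\exp_{n,st}(-\pi ixm)$ reduces everything to $\|g''_{n,st}\|_{L^{1}(\lambda_{n,st})}$, the interior second differences are exactly $g''(\xi_{j})$ by the mean value form of the second difference and contribute at most $2\sup|g''|$, and the one exceptional index $j=n-2$ created by the convention $g'_{n,st}({n-1\over n})=0$ is of size $O(n)$ but sits on a cell of measure ${1\over n}$, hence contributes $O(1)$. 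This makes your write-up self-contained where the paper is not, and it isolates the genuinely delicate point of the lemma (the $O(n)$ boundary spike being exactly compensated by the $1/n$ cell measure) explicitly; it also makes visible that the endpoint hypotheses $g(\pm 1)=0$ are needed only for the $F_{n}$ half, something the paper's citation obscures. Your constants differ from the paper's ($2\|g''\|_{C}+\|g'\|_{C}$ rather than $M+2B$), but the lemma only asserts a uniform bound, so this is immaterial.
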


\begin{proof}

The proof is similar to Lemma 0.22 of \cite{dep1}, with some minor modifications. Observing that;\\

 $D_{n}(m)=0$, $(g(-1)=0)$\\

 $|D'_{n}(m)|\leq {1\over n}|g'_{n,st}|(-1)$\\

 $|\phi_{n}(m)|\leq 2n$, $|\psi_{n}(m)|\leq 2n$\\

$|C_{n}(m)|\leq |g_{n,st}|({n-1\over n})$, $(g(-1)=0)$\\

$|C'_{n}(m)|\leq|g'_{n,st}|(-1)$\\

we obtain;\\

$|F_{n}(m)|\leq 2n|g_{n,st}|({n-1\over n})+3n|g'_{n,st}|(-1)$\\

$=2n|g_{n,st}|({n-1\over n})+3n|g_{n,st}|({1-n\over n})$, $(g(-1)=0)$\\

Now assuming that $g$ is real valued, otherwise take real and imaginary parts, we can apply the mean value theorem, and using the assumptions on $g$, we have that;\\

$-ng_{n,st}({n-1\over n})=g'(c_{n})$, $c_{n}\in ({n-1\over n},1)$\\

$ng_{n,st}({1-n\over n})=g'(d_{n})$, $d_{n}\in (-1,{1-n\over n})$\\

$|F_{n}(m)|\leq 2|g'(c_{n})|+3|g'(d_{n})|\leq 5D$\\

where $D=||g'||_{C(\mathcal{S})}$.\\

We now follow through the rest of the proof of Lemma 0.22 in \cite{dep1}, replacing $n^{2}$ by $n$, to obtain;\\

$|\hat{g''}_{n,st}(m)|\leq M+2B$\\

where $M=||g''||_{L^{1}(\mathcal{S})}$, and $B=||g||_{C(\mathcal{S})}$.

\end{proof}

\begin{lemma}
\label{tailvanish}
If $g\in C^{\infty}([-1,1])$, there exists a constant $H\in\mathcal{R}$ such that, for all $n\geq 1$, and $-n\leq m\leq n-1$, $m\neq 0$, with $n\in\mathcal{N}$, $m\in\mathcal{Z}$;\\

$|\hat{g}_{n,st}(m)|\leq {H\over m^{2}}$\\

Moreover, if $\epsilon>0$ is standard, there exists a constant $N(\epsilon)\in\mathcal{N}_{>0}$, such that for all $n>N(\epsilon)$, for all $L,L'\in\mathcal{N}$ with $N(\epsilon)<|L|\leq |L'|\leq n$, $LL'>0$;\\

$\int_{L}^{L'}|\hat{g}_{n,st}|(m)d\delta_{n}(m)<\epsilon$\\

\end{lemma}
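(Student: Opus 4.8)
The plan is to read the bound off directly from the second identity of Lemma \ref{dft}, namely $\hat{g}_{n,st}(m) = (\hat{g''}_{n,st}(m) + F_n(m))/\psi_n^2(m)$ for $m \neq 0$, by combining a lower bound on the denominator of order $m^2$ with a uniform upper bound on the numerator. The numerator is controlled by Lemma \ref{unifbounded}, while the denominator is handled by an elementary estimate on $\psi_n$. Once the pointwise bound $|\hat{g}_{n,st}(m)| \leq H/m^2$ is in hand, with $H$ independent of $n$, the ``moreover'' clause about vanishing tails reduces to a routine comparison with the convergent series $\sum 1/m^2$.

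First I would estimate the denominator. Writing $\psi_n(m) = n(\exp(\pi i m/n) - 1)$ gives $|\psi_n(m)| = 2n\,|\sin(\pi m/(2n))|$. For $-n \leq m \leq n-1$ the argument $\pi m/(2n)$ lies in $[-\pi/2,\pi/2)$, so Jordan's inequality $|\sin\theta| \geq (2/\pi)|\theta|$ yields $|\psi_n(m)| \geq 2n\cdot|m|/n = 2|m|$, whence $|\psi_n^2(m)| \geq 4m^2$. This is uniform in $n$, and the restriction $|m| \leq n$ is used only here.

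The main work, and the main obstacle, is the numerator. Lemma \ref{unifbounded} furnishes a uniform bound $|\hat{g''}_{n,st}(m)| + |F_n(m)| \leq K$ (with $K = M + 2B + 5D$ in the notation there), but only under the normalization $g(-1)=g(1)=0$. So the first step for a general $g \in C^{\infty}([-1,1])$ is to reduce to this normalized case: I would subtract a fixed smooth function $p$ agreeing with $g$ at the two endpoints, so that $h = g - p$ satisfies $h(-1)=h(1)=0$ and Lemma \ref{unifbounded} applies to $h$, giving $|\hat{h}_{n,st}(m)| \leq H_h/m^2$. The remaining task is then to show that the discrete coefficients $\hat{p}_{n,st}(m)$ of the explicit correction obey the same $1/m^2$ bound uniformly in $n$; since $p$ is an explicit elementary function this is to be attempted by a direct computation via Lemma \ref{dft} again. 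This is the delicate point, because it is precisely here that the endpoint data of $g$ enters the estimate, and I expect this reduction, rather than either of the two bounds above, to be where the real content of the lemma lies.

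Finally, granting $|\hat{g}_{n,st}(m)| \leq H/m^2$ for all $n \geq 1$ and all $m \neq 0$ with $|m| \leq n$, the tail clause follows at once. Since $\delta_n$ is counting measure and the condition $LL' > 0$ confines $m$ to a single run of consecutive integers of one sign,
\[
\int_L^{L'} |\hat{g}_{n,st}|(m)\, d\delta_n(m) \;\leq\; H\sum_{k=|L|}^{|L'|} \frac{1}{k^2} \;\leq\; \frac{H}{|L|-1}.
\]
Choosing $N(\epsilon) \in \mathcal{N}_{>0}$ with $N(\epsilon) > 1 + H/\epsilon$ then makes the right-hand side $< \epsilon$ for every $n$ and for all $L,L'$ with $N(\epsilon) < |L| \leq |L'| \leq n$, which is the required estimate.
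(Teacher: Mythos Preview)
Your estimate $|\psi_n(m)|^2 \geq 4m^2$ via Jordan's inequality and your tail bound by comparison with $\sum 1/m^2$ agree with the paper's proof. The divergence is in the normalization step needed to invoke Lemma \ref{unifbounded}. You propose subtracting a smooth $p$ with $p(\pm 1)=g(\pm 1)$ and then bounding $\hat p_{n,st}(m)$ separately, a step you leave open and flag as ``the delicate point''. The paper instead subtracts the \emph{constant} $c=g(-1)=g(1)$ (implicitly using that, in the intended application, $g\in C^\infty(\mathcal{S})$ so the two endpoint values coincide); then $h=g-c$ satisfies the hypotheses of Lemma \ref{unifbounded} directly, and the correction term is trivial because $\hat c_{n,st}(m)=0$ for all $m\neq 0$ by orthogonality of the discrete characters (equivalently, by Theorem \ref{discinversion} applied to a constant). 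Thus $\hat g_{n,st}(m)=\hat h_{n,st}(m)$ for $m\neq 0$, and the step you singled out as carrying the real content of the lemma is in fact free.

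It is also worth noting that your more general reduction cannot be completed when $g(-1)\neq g(1)$: for $g(x)=x$ one has $\hat g_{n,st}(m)\to \hat g(m)$ with $|\hat g(m)|=2/(\pi|m|)$, so no bound $H/m^2$ uniform in $n$ can exist, and any nonconstant correction $p$ would inherit this $1/|m|$ decay. The hypothesis in the lemma should be read as $g\in C^\infty(\mathcal{S})$ (i.e.\ $g(-1)=g(1)$), which is exactly what makes the constant subtraction work and is all that is used downstream.
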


\begin{proof}

As in Lemma 0.23 of \cite{dep1}, we have;\\

$|\psi_{n}(m)|^{2}\geq 4m^{2}$ $(-n\leq m\leq n-1)$ $(*)$\\

The function $h=g-c$, where $c=g(0)=g(1)$, satisfies the hypotheses of Lemma \ref{unifbounded}. Let $W$ be the constant bound obtained there. Then, using Lemma \ref{dft}, $(*)$, we obtain, for $m\neq 0$;\\

$|\hat{h}_{n,st}(m)|\leq {W\over 4m^{2}}$ $(**)$\\

Now observe that $\hat{g}_{n,st}=\hat{h}_{n,st}+\hat{c}_{n,st}$ and, using, for example, Lemma \ref{discinversion}, that $\hat{c}_{n,st}(m)=0$, for $m\neq 0$, $(***)$. Then, combining $(**),(***)$, we obtain the first result with $H={W\over 4}$. Now;\\

$\int_{L}^{L'}|\hat{g}|_{n,st}(m)d\delta_{n}(m)$\\

$\leq\int_{L}^{n}|\hat{g}|_{n,st}(m)d\delta_{n}(m)$\\

$\leq\int_{L}^{n}{H\over m^{2}}d\delta_{n}(m)$\\

$=\sum_{k=L}^{n-1}{H\over m^{2}}$\\

$\leq\int_{L-1}^{n-1}{H\over x^{2}}dx$\\

$=[{-H\over x}]_{L-1}^{n-1}={H\over L-1}-{H\over n-1}<\epsilon$\\

if $min(n,L)>N(\epsilon)={2H\over\epsilon}+1$\\

\end{proof}

We can now show the analogous result to Theorem \ref{gSinteg}. We require some further notation;\\

\begin{defn}
\label{zext}
If $g\in C^{\infty}([-1,1])$, with Fourier coefficients $\hat{g}(m)$, for $m\in\mathcal{Z}$, as given in \ref{coefficient}, then we consider $\hat{g}:\mathcal{Z}\rightarrow\mathcal{C}$, as a measurable function on $(\mathcal{Z},\mathfrak{D},\delta)$, where $\mathfrak{D}$ is the $\sigma$-algebra of subsets of $\mathcal{Z}$, and $\delta$ is the counting measure, given by $\delta(m)=1$, for $m\in\mathcal{Z}$. We let $\mathcal{Z}^{+-\infty}$ denote the extended integers $\mathcal{Z}\cup\{+\infty,-\infty\}$. We let $\hat{g}_{\infty}$ be the extension of $\hat{g}$ to $\mathcal{Z}^{+-\infty}$, obtained by setting $\hat{g}_{\infty}(+\infty)=\hat{g}_{\infty}(-\infty)=0$, (\footnote{\label{zmmp} As in Lemma 0.6 of \cite{dep1}, it is easy to show there exists a unique $\sigma$-algebra $\mathfrak{D}'$ on $\mathcal{Z}^{+-\infty}$, which separates the points $+\infty$ and $-\infty$, and such that $\mathfrak{D}'|_{\mathcal{Z}}=\mathfrak{D}$. Moreover, there is a unique extension of $\delta$ to a complete measure $\delta'$ on $\mathfrak{D}'$, with $\delta'(+\infty)=\delta'(-\infty)=\infty$. As in Theorem 0.7 of \cite{dep1}, it is straightforward to show that;\\

$st:(\overline{\mathcal{Z}}_{\eta},L(\mathfrak{D}_{\eta}),L(\delta_{\eta}))\rightarrow (\mathcal{Z}^{+-\infty},\mathfrak{D}',\delta')$\\

is measurable and measure preserving. In particular, if $\hat{g}$ is integrable $\delta$ (iff $st^{*}(\hat{g}_{\infty})$ is integrable $L(\delta_{\eta})$), we have;\\

$\int_{\overline{\mathcal{Z}}_{\eta}}st^{*}(\hat{g}_{\infty})dL(\delta_{\eta})=\int_{\mathcal{Z}^{+-\infty}}\hat{g}_{\infty}d\delta'
=\int_{\mathcal{Z}}\hat{g}d\delta$\\}).
\end{defn}

\begin{theorem}
\label{transgSinteg}

Let $g\in C^{\infty}([-1,1])$, then $\hat{g}_{\eta}$, as given in Definition \ref{nscoeff}, is $S$-integrable on $\overline{{\mathcal Z}_{\eta}}$. Moreover ${^{\circ}\hat{g}}_{\eta}=st^{*}(\hat{g}_{\infty})$, everywhere $L(\delta_{\eta})$, and;\\

${^{\circ}\int_{\overline{{\mathcal Z}_{\eta}}}\hat{g}_{\eta} d\delta_{\eta}}=\int_{\overline{{\mathcal Z}_{\eta}}}st^{*}(\hat{g}_{\infty}) d L(\delta_{\eta})=\int_{\mathcal{Z}} \hat{g} d\delta$\\

see Definition \ref{zext} and footnote \ref{zmmp} for relevant terminology.
\end{theorem}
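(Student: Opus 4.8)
The plan is to mirror the proof of Theorem \ref{gSinteg}, with the tail estimate of Lemma \ref{tailvanish} now playing the role that Darboux's theorem played there. The essential new difficulty is that $\overline{\mathcal{Z}}_{\eta}$ carries an unbounded (infinite total) counting measure, so controlling the behaviour of $\hat{g}_{\eta}$ at \emph{infinite} indices $m$ is the heart of the matter.

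First I would transfer both conclusions of Lemma \ref{tailvanish}. Since the standard constant $H$ and the bound $|\hat{g}_{n,st}(m)|\leq H/m^{2}$ hold for every standard $n\geq 1$ and every $m\neq 0$ with $-n\leq m\leq n-1$, transfer gives $|\hat{g}_{\eta}(m)|\leq H/m^{2}$ for all $m\in\overline{\mathcal{Z}}_{\eta}$, $m\neq 0$. Likewise, for each standard $\epsilon>0$ the standard bound $N(\epsilon)$ transfers, so that for every internal interval of indices with $N(\epsilon)<|L|\leq|L'|\leq\eta$ and $LL'>0$ one has $\int_{L}^{L'}|\hat{g}_{\eta}|\,d\delta_{\eta}<\epsilon$. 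In particular, taking $L$ infinite and $L'=\eta-1$ (and symmetrically on the negative side), the sum of $|\hat{g}_{\eta}(m)|$ over $\{|m|\geq L\}$ is $<\epsilon$ for every standard $\epsilon$, hence infinitesimal.

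Next I would establish the pointwise identity ${}^{\circ}\hat{g}_{\eta}=st^{*}(\hat{g}_{\infty})$. For finite $m\in\mathcal{Z}$, the function $h(x)=g(x)e^{-\pi i xm}$ lies in $C^{\infty}(S)$ (as $e^{-\pi i xm}$ is $2$-periodic), and its $\mathfrak{C}_{\eta}$-counterpart is exactly $h_{\eta}=g_{\eta}\,exp_{\eta}(-\pi i xm)$; hence $\hat{g}_{\eta}(m)=\int_{\overline{\mathcal{S}}_{\eta}}h_{\eta}\,d\lambda_{\eta}$, and Theorem \ref{gSinteg} applied to $h$ yields ${}^{\circ}\hat{g}_{\eta}(m)=\int_{S}h\,d\mu=\hat{g}(m)=\hat{g}_{\infty}(m)$. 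For infinite $m$ (so $st(m)=\pm\infty$), the transferred bound gives $|\hat{g}_{\eta}(m)|\leq H/m^{2}\simeq 0$, so ${}^{\circ}\hat{g}_{\eta}(m)=0=\hat{g}_{\infty}(\pm\infty)$. Since every index is either finite or infinite, the identity holds at every point. Taking standard parts in the bound also shows $|\hat{g}(m)|\leq H/m^{2}$, so $\hat{g}$ is $\delta$-integrable and footnote \ref{zmmp} applies.

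The main work, and the step I expect to be the main obstacle, is the $S$-integrability of $\hat{g}_{\eta}$ on the unbounded space $\overline{\mathcal{Z}}_{\eta}$: here finiteness of $\int_{\overline{\mathcal{Z}}_{\eta}}|\hat{g}_{\eta}|\,d\delta_{\eta}$ alone is \emph{not} sufficient, since mass can escape to infinite indices, and one must rule this out. Finiteness itself is immediate, since $\int|\hat{g}_{\eta}|\,d\delta_{\eta}\leq|\hat{g}_{\eta}(0)|+2H\sum_{m=1}^{\eta}m^{-2}$, the hyperfinite sum having standard part $\sum_{m\geq 1}m^{-2}$ while $|\hat{g}_{\eta}(0)|\simeq|\hat{g}(0)|$. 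Because $|\hat{g}_{\eta}|$ is bounded by a finite constant, the set $\{|\hat{g}_{\eta}|\geq K\}$ is empty for infinite $K$, so the usual large-value condition is trivial; the remaining tail condition, that $\int_{\{|m|\geq L\}}|\hat{g}_{\eta}|\,d\delta_{\eta}\simeq 0$ for infinite $L$, is exactly what the transferred second part of Lemma \ref{tailvanish} supplies. Feeding these facts into the characterisation of $S$-integrability in Remarks 3.21 of \cite{dep2} gives $S$-integrability. The Loeb integration theorem (as in \cite{and}) then yields ${}^{\circ}\int_{\overline{\mathcal{Z}}_{\eta}}\hat{g}_{\eta}\,d\delta_{\eta}=\int_{\overline{\mathcal{Z}}_{\eta}}{}^{\circ}\hat{g}_{\eta}\,dL(\delta_{\eta})$, which by the pointwise identity and the measure-preserving property of $st$ from footnote \ref{zmmp} equals $\int_{\mathcal{Z}^{+-\infty}}\hat{g}_{\infty}\,d\delta'=\int_{\mathcal{Z}}\hat{g}\,d\delta$, completing the chain.
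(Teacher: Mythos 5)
Your proposal is correct and follows essentially the same route as the paper's own proof: transferring both parts of Lemma \ref{tailvanish} to get the $H/m^{2}$ bound (hence $\hat{g}_{\eta}(m)\simeq 0$ at infinite $m$) and the infinitesimal tails, handling finite $m$ via the $S$-integrability of $g_{\eta}\,exp_{\eta}(-\pi i xm)$ (the paper dominates it by $g_{\eta}$ using Corollary 5 of \cite{and} and argues as in Theorem 0.24 of \cite{dep1}, where you instead apply Theorem \ref{gSinteg} directly to $h=g\,e^{-\pi i xm}$ --- an equivalent packaging), then combining boundedness with the tail estimate for $S$-integrability and closing the chain via Loeb integration and footnote \ref{zmmp}. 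The one slip is your parenthetical claim that $h\in C^{\infty}(\mathcal{S})$, which under the stated hypothesis $g\in C^{\infty}([-1,1])$ requires $g$ itself to be $2$-periodic; this is harmless here, since the proof of Theorem \ref{gSinteg} only uses continuity of $g$ on $[-1,1]$, and the paper shares the same looseness when it invokes Theorem \ref{gSinteg} in this proof.
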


\begin{proof}
By Lemma \ref{tailvanish};\\

$\mathcal{R}\models (\forall n_{(n>N(\epsilon))})(\forall L,N_{(LN\geq 0, N(\epsilon)<|L|,|N|<n)})\int_{L}^{N}|\hat{g}_{n,st}| d\delta_{n,st} <\epsilon$\\

Hence, the corresponding statement is true in ${^{*}\mathcal{R}}$. In particular, if $\eta$ is infinite, and $\{L,N\}$ are infinite, of the same sign, belonging to $\overline{\mathcal{Z}}_{\eta}$, we have that;\\

$\int_{L}^{N}|\hat{g}_{\eta}| d\delta_{\eta} < \epsilon$\\

As $\epsilon$ was arbitrary we conclude that;\\

$\int_{L}^{N}|\hat{g}_{\eta}| d\delta_{\eta}\simeq 0$ $(*)$\\

for all infinite $\{L,N\}$, of the same sign, in $\overline{\mathcal{R}}_{\eta}$. Now, using Definition \ref{nscoeff} and the fact that $|exp_{\eta}(-\pi ixm)|\leq 1$, by transfer, we have, for $m\in\overline{\mathcal{Z}}_{\eta}$;\\

$|\hat{g}_{\eta}(m)|\leq\int_{\overline{{\mathcal S}_{\eta}}}|g_{\eta}(x)d\lambda_{\eta}=C$\\

where $C$ is finite, as, by Theorem \ref{gSinteg}, $g_{\eta}$ is $S$-integrable. It follows that for $n\in\mathcal{N}$, the functions $\hat{g}_{\eta}\chi_{[-n,n]}$ are finite, in the sense of Definition 1.7 of \cite{dep1}. Now, proceeding as in Theorem 0.17 of \cite{dep1}, replacing $\overline{\mathcal{R}}_{\eta}$ by $\overline{\mathcal{Z}}_{\eta}$, we obtain that $\hat{g}_{\eta}$ is $S$-integrable.\\

 If $m\in{\mathcal{Z}_{\eta}}$, the function $r_{m}(x)=g_{\eta}(x)exp_{\eta}(-\pi ixm)$ is $S$-integrable, by Corollary 5 of \cite{and}, as $|r_{m}|\leq |g_{\eta}|$, and $g_{\eta}$ is $S$-integrable, by Theorem \ref{gSinteg}. Then, if $m\in\mathcal{Z}_{\eta}$ is finite, as in Theorem 0.24 of \cite{dep1}, just replacing ${\overline{R}_{\eta}}$ by ${\overline{S}_{\eta}}$, and using Definition \ref{nscoeff}, Theorem 1.9 of \cite{dep1}, Theorem \ref{gSinteg}, continuity of exp, see Theorem 1.6 of \cite{dep1}, and Lemma \ref{stpart};\\

${^{\circ}\hat{g}_{\eta}}(m)=\hat{g}(m)=st^{*}(\hat{g}_{\infty})(m)$ $(**)$\\

Now, using the first part of Lemma \ref{tailvanish}, we obtain, by transfer, that for infinite $m\in\overline{\mathcal{Z}}_{\eta}$, $\hat{g}_{\eta}(m)\simeq 0$. By Definition \ref{zext}, we have $st^{*}(\hat{g}_{\infty})(m)=0$, $(***)$. Combining $(**),(***)$ gives ${^{\circ}\hat{g}_{\eta}}=st^{*}(\hat{g}_{\infty})$, everywhere $L(\delta_{\eta})$. The rest of the theorem follows from footnote \ref{zmmp}.
\end{proof}

Finally, we have;\\

\begin{theorem}
\label{nspfit}
For $g\in C^{\infty}(\mathcal{S})$, there is a non standard proof of the uniform convergence of its Fourier series, Theorem \ref{converge}.

\end{theorem}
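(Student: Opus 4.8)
The plan is to derive Theorem~\ref{converge} by taking standard parts in the nonstandard inversion formula of Lemma~\ref{nsinversion}, and then to upgrade the resulting pointwise identity to a uniform statement by means of the standard nonstandard criterion for uniform convergence. Two ingredients carry the argument: the $S$-integrability of $\hat g_\eta$ together with the everywhere identity ${}^\circ\hat g_\eta=st^{*}(\hat g_\infty)$ from Theorem~\ref{transgSinteg}, and the summable tail estimate $|\hat g(m)|\le H/m^{2}$ coming from Lemma~\ref{tailvanish}. Throughout I fix $\eta\in{}^{*}\mathcal{N}\setminus\mathcal{N}$ and take $g\in C^{\infty}(\mathcal{S})\subset C^{\infty}([-1,1])$, so that $g(-1)=g(1)$ and Lemma~\ref{tailvanish} applies to $g$ directly.

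First I would establish pointwise convergence. Applying Lemma~\ref{nsinversion} to $f=g_\eta$ gives the exact internal identity
\[
g_\eta(x)=\tfrac12\sum_{m\in\overline{\mathcal{Z}}_\eta}\hat g_\eta(m)\,exp_\eta(\pi ixm),\qquad x\in\overline{\mathcal{S}}_\eta,
\]
which I read as a hyperfinite integral $\tfrac12\int_{\overline{\mathcal{Z}}_\eta}\hat g_\eta(m)\,exp_\eta(\pi ixm)\,d\delta_\eta$. Since $|exp_\eta(\pi ixm)|\le 1$ and $\hat g_\eta$ is $S$-integrable by Theorem~\ref{transgSinteg}, the integrand is $S$-integrable, so the standard part commutes with the integral exactly as in Theorems~\ref{gSinteg} and~\ref{transgSinteg}. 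For finite $m$ the integrand has standard part $\hat g(m)e^{\pi i\,{}^\circ\! x\,m}$, using ${}^\circ\hat g_\eta(m)=\hat g(m)$ and the continuity ${}^\circ exp_\eta(\pi ixm)=e^{\pi i\,{}^\circ\! x\,m}$; for infinite $m$ it has standard part $0$, because $st^{*}(\hat g_\infty)(m)=0$ there. Thus ${}^\circ(\hat g_\eta(m)\,exp_\eta(\pi ixm))=st^{*}(F_\infty)(m)$, where $F_\infty\colon\mathcal{Z}^{+-\infty}\to\mathcal{C}$ is given by $F_\infty(m)=\hat g(m)e^{\pi i\,{}^\circ\! x\,m}$ on $\mathcal{Z}$ and $F_\infty(\pm\infty)=0$. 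Invoking the measure preserving standard part map of footnote~\ref{zmmp} then yields
\[
g({}^\circ x)={}^\circ g_\eta(x)=\tfrac12\int_{\overline{\mathcal{Z}}_\eta}st^{*}(F_\infty)\,dL(\delta_\eta)=\tfrac12\int_{\mathcal{Z}^{+-\infty}}F_\infty\,d\delta'=\tfrac12\sum_{m\in\mathcal{Z}}\hat g(m)e^{\pi i\,{}^\circ\! x\,m},
\]
the series converging absolutely by the bound recalled below. As every point of $\mathcal{S}$ is ${}^\circ x$ for some $x\in\overline{\mathcal{S}}_\eta$, this is precisely the pointwise assertion of Theorem~\ref{converge}.

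For uniform convergence I would use the criterion that $\tfrac12 S_n(g)\to g$ uniformly on $\mathcal{S}$ iff $\tfrac12\,{}^{*}S_N(g)(x)\simeq{}^{*}g(x)$ for every infinite $N\in{}^{*}\mathcal{N}$ and every $x\in{}^{*}\mathcal{S}$, which is immediate by transfer. From Lemma~\ref{tailvanish} and the identity ${}^\circ\hat g_\eta(m)=\hat g(m)$ one obtains $|\hat g(m)|\le H/m^{2}$ for all standard $m\ne 0$, hence $|{}^{*}\hat g(m)|\le H/m^{2}$ for all $m\in{}^{*}\mathcal{Z}\setminus\{0\}$ by transfer. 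Fix an infinite $N$ and some $x\in{}^{*}\mathcal{S}$, and for each standard $K<N$ write $\tfrac12\,{}^{*}S_N(g)(x)-g({}^\circ x)=A_K+B_K-C_K$, where $A_K=\tfrac12\sum_{|m|\le K}({}^{*}\hat g(m)\,{}^{*}e^{\pi ixm}-\hat g(m)e^{\pi i\,{}^\circ\! x\,m})$ collects the finitely many low frequencies, $B_K=\tfrac12\sum_{K<|m|\le N}{}^{*}\hat g(m)\,{}^{*}e^{\pi ixm}$ is the tail of the nonstandard partial sum, and $C_K=\tfrac12\sum_{|m|>K}\hat g(m)e^{\pi i\,{}^\circ\! x\,m}$ the tail of the standard series. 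Here $A_K\simeq 0$, being a finite sum of differences of infinitely close terms, while $|B_K|,|C_K|\le\tfrac12\sum_{|m|>K}H/m^{2}\le H/K$, uniformly in $x$ and $N$, by the transferred estimate. Taking standard parts and letting $K\to\infty$ forces $\tfrac12\,{}^{*}S_N(g)(x)\simeq g({}^\circ x)\simeq{}^{*}g(x)$, the last step by continuity of $g$; the criterion then delivers uniform convergence, completing the nonstandard proof of Theorem~\ref{converge}.

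The main obstacle is the interchange of standard part and hyperfinite summation in the first step: it is legitimate only because $\hat g_\eta$ is genuinely $S$-integrable and the standard part map on the frequency space $\overline{\mathcal{Z}}_\eta$ is measure preserving onto $(\mathcal{Z}^{+-\infty},\mathfrak{D}',\delta')$, which is exactly the content assembled in Theorem~\ref{transgSinteg} and footnote~\ref{zmmp}; correctly identifying the standard part of the integrand as $st^{*}(F_\infty)$, with its forced vanishing at the infinite frequencies, is the delicate point. For the passage to uniformity the essential input is that the tail bound $H/m^{2}$ is independent of both $x$ and $n$, which is where the smoothness of $g$ enters, through the two discrete integrations by parts behind Lemmas~\ref{disccalc}, \ref{dft} and~\ref{tailvanish}; once this uniform summable bound is in hand, the nonstandard criterion makes uniformity, as opposed to mere pointwise convergence, essentially automatic.
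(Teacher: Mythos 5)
Your proposal is correct, and for the pointwise identity it follows the paper's own proof essentially verbatim: both arguments take standard parts in the hyperfinite inversion formula of Lemma \ref{nsinversion}, justify the interchange via the $S$-integrability of $\hat{g}_{\eta}$ (dominating the integrand $s_{x}(m)=\hat{g}_{\eta}(m)exp_{\eta}(\pi ixm)$ by $|\hat{g}_{\eta}|$) together with ${^{\circ}\hat{g}}_{\eta}=st^{*}(\hat{g}_{\infty})$ from Theorem \ref{transgSinteg}, and then push the Loeb integral down to $\sum_{m\in\mathcal{Z}}$ through the measure-preserving standard part map of footnote \ref{zmmp}. Where you genuinely diverge is in the finishing moves. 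The paper identifies the $\delta$-integral over $\mathcal{Z}$ with $\lim_{n}S_{n}(g)$ by dominated convergence against the truncations $\hat{g}\,exp_{\pi ix}\chi_{[-n,n]}$, deriving the integrability of $\hat{g}$ from the $S$-integrability of $\hat{g}_{\eta}$ (via Theorem 3.24 of \cite{dep2}) rather than from the $H/m^{2}$ bound as you do; it handles $x=1$ by periodicity (footnote \ref{caseone}) where you use surjectivity of $st$ on $\overline{\mathcal{S}}_{\eta}$; and, most notably, having extracted $|\hat{g}(m)|\leq{H\over m^{2}}$ for $m\neq 0$ exactly as you do (transfer of Lemma \ref{tailvanish} plus $\hat{g}(m)={^{\circ}\hat{g}}_{\eta}(m)$), it concludes with the standard Weierstrass $M$-test, whereas you stay in the nonstandard idiom, invoking the infinite-index criterion for uniform convergence and the $A_{K}+B_{K}-C_{K}$ splitting with tail bounds $|B_{K}|,|C_{K}|\leq{H\over K}$. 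The two finishes are equivalent in content --- your transferred tail estimate is precisely the $M$-test bound --- but the paper's is shorter, while yours keeps the whole proof nonstandard at the modest cost of the underspill argument behind the criterion, which is true but not quite ``immediate by transfer'' as you assert. Both routes rest on the same two pillars, Theorem \ref{transgSinteg} and Lemma \ref{tailvanish}, and your identification of the delicate point (the vanishing of $st^{*}(F_{\infty})$ at infinite frequencies, which legitimises the interchange) matches the paper's emphasis.
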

\begin{proof}
By Lemma \ref{nsinversion}, and using the definition of the internal integral $\int_{\mathcal{Z}_{\eta}}$ to replace the hyperfinite sum $\Sigma_{m\in\mathcal{Z}_{\eta}}$, we have that;\\

$g_{\eta}(x)={1\over 2}\int_{\overline{\mathcal Z}_{\eta}}\hat{g}_{\eta}(m)exp_{\eta}(\pi ixm)d\delta_{\eta}(m)$ $(*)$\\

for $x\in\overline{{\mathcal S}_{\eta}}$. As in Theorem \ref{transgSinteg}, the function $s_{x}(m)=\hat{g}_{\eta}(m)exp_{\eta}(\pi ixm)$ is $S$-integrable, because, by the same theorem, $\hat{g}_{\eta}$ is $S$-integrable. We now argue as before, and use the result that ${^{\circ}{g}}_{\eta}=st^{*}(\hat{g}_{\infty})$, everywhere $L(\delta_{\eta})$ $(**)$. Note that, as $\hat{g}_{\eta}$ is $S$-integrable, using Theorem 3.24 of \cite{dep2} and $(**)$, $st^{*}(\hat{g}_{\infty})$ is integrable $L(\delta_{\eta})$ and, therefore, by footnote \ref{zmmp} $\hat{g}$ is integrable $\delta$, $(***)$. We have, if $x\in [-1,1)$, taking standard parts in $(*)$;\\

$g(x)={^{\circ}g_{\eta}}(x)={1\over 2}\int_{\overline{\mathcal Z}_{\eta}}{^{\circ}\hat{g}_{\eta}}(m){^{\circ}exp_{\eta}}(\pi ixm)d L(\delta_{\eta})(m)$\\

$={1\over 2}\int_{m finite}st^{*}(\hat{g}_{\infty})(m)exp_{\eta}(\pi i{^{\circ}x}m)dL(\delta_{\eta})(m)$\\

$={1\over 2}\int_{m finite}st^{*}(\hat{g}_{\infty}exp_{\pi ix})(m)d L(\delta_{\eta})(m)$\\

$={1\over 2}\int_{\mathcal{Z}}\hat{g}(m)exp(\pi ixm)d\delta(m)$\\

$={1\over 2}\sum_{m\in\mathcal{Z}}\hat{g}(m)exp(\pi ixm)$ $(\dag)$, (\footnote{\label{caseone} The case $x=1$ obviously then follows from the fact that $g(1)=g(-1)$ and $exp(\pi im)=exp(-\pi im)$, for $m\in\mathcal{Z}$.})\\

The sum in $(\dag)$ can be taken as $lim_{n\rightarrow\infty}S_{n}(g)$, for\\
$S_{n}(g)=\sum_{m=-n}^{n}\hat{g}(m)exp(\pi ixm)$, $(\dag\dag)$ because, for $n\in\mathcal{N}$, $g_{n,x}(m)=(\hat{g}exp_{\pi ix}\chi_{[-n,n]})(m)$, converges everywhere $\delta$ to $(\hat{g}exp_{\pi ix})(m)$, so $(\dag\dag)$ follows from $(***)$ and the DCT. In order to obtain uniform convergence in $x$, we use the fact, from Theorem \ref{transgSinteg}, that, for $m\in\mathcal{Z}$, $\hat{g}(m)={^{\circ}\hat{g}_{\eta}}(m)$ and, the first part of Lemma \ref{tailvanish}, which gives, by transfer, that $|\hat{g}_{\eta}(m)|\leq {H\over m^{2}}$, for $m\in\mathcal{Z}_{\neq 0}$. Combining these results gives that $|\hat{g}(m)|\leq {H\over m^{2}}$, for $m\neq 0$, so $|g_{m}(x)|=|\hat{g}(m)exp(\pi ixm)|\leq{H\over m^{2}}$, $m\neq 0$, uniformly in $x\in [-1,1]$, $(\dag\dag\dag)$. Applying Weierstrass $M$-test, see \cite{bs}, and the estimate $(\dag\dag\dag)$, gives the required uniform convergence of the sums $S_{n}(g)=\sum_{m=-n}^{n}g_{m}$.

\end{proof}

\end{document}